\DeclareFontFamily{T1}{calligra}{}
\DeclareFontShape{T1}{calligra}{m}{n} {<-> callig15}{}
\newtheorem*{theorem-nonumber}{Theorem}{\bfseries}{\rmfamily}
\newtheorem*{maintheorem}{Main Theorem}{\bfseries}{\rmfamily}
\newtheorem{theorem}{Theorem}
\newtheorem{lemma}{Lemma}
\newtheorem{proposition}{Proposition}
\newtheorem{corollary}{Corollary}
\theoremstyle{definition}
\newtheorem{remark}{Remark}
\numberwithin{theorem}{section}
\numberwithin{proposition}{section} \numberwithin{lemma}{section}
\numberwithin{corollary}{section} \numberwithin{remark}{section}
\numberwithin{example}{section}
\newcommand{\bT}{\mathbf{Tab}}
\newcommand{\bS}{\mathbf{S}}
\newcommand{\sh}{{\rm sh\,}}
\newcommand{\ov}{\overline}
\newcommand{\B}{\mathcal{B}}
\newcommand{\F}{\mathcal{F}}
\newcommand{\Sl}{\mathfrak{s}\mathfrak{l}}
\newcommand{\Co}{\mathbb{C}}
\newcommand{\rar}{\rightarrow}
\newcommand{\st}{\subset}
\newcommand{\K}{{\mathcal K}}
\begin{document}

\title[Singularity of components of Springer fibers]
{On the singularity of the irreducible components of \\ a Springer fiber in $\Sl_n$}

\author{Lucas Fresse}
\address{Department of Mathematics,
The Weizmann Institute of Science, Rehovot 76100, Israel}
\email{lucas.fresse@weizmann.ac.il}

\author{Anna Melnikov}
\address{Department of Mathematics,
University of Haifa, Haifa 31905, Israel}
\email{melnikov@math.haifa.ac.il}
\keywords{Flag variety;  Springer fibers;
Young diagrams and tableaux}

\begin{abstract}
Let $\B_u$ be the Springer fiber over a nilpotent endomorphism
$u\in {\rm End}(\Co^n)$. Let $J(u)$ be the Jordan form of $u$
regarded as a partition of $n.$ The irreducible components of
$\B_u$ are all of the same dimension. They are labelled by Young
tableaux of shape $J(u).$ We study the question of singularity of
the components of $\B_u$ and show that all the components of
$\B_u$ are nonsingular if and only if
$J(u)\in\{(\lambda,1,1,\ldots),\ (\lambda_1,\lambda_2),\ (\lambda_1,\lambda_2,1),\ (2,2,2)\}$.
\end{abstract}

\maketitle

\section{Introduction}

\subsection{The Springer fiber $\B_u$}

\label{1.1}

Let $V=\Co^n$ for $n\geq 0$ and let $u:V\rar V$ be a nilpotent
endomorphism. Let $\B:=\B_n$ denote the variety of complete flags
of $V$ and $\B_u$ the variety of complete flags preserved by $u$,
that is flags $(V_0,\ldots,V_n)$ such that $u(V_i)\st V_i$ for all
$i\ :\ 0\leq i\leq n.$ Both $\B$ and $\B_u$ are algebraic
projective varieties. The variety $\B_u$ is called the Springer
fiber over $u$ since it can be regarded as the fiber over $u$ of
the Springer resolution of singularities of the cone of nilpotent
endomorphisms of $V$ (cf., for example \cite{S1}).

The variety $\B_u$ is reducible in general and its irreducible
components play a key role in Springer's Weyl group
representations, as well as in the study of the primitive ideals
in $U(\Sl_n(\Co)).$ Description of their geometry is both a very
important and challenging topic for more than 30 years. There are
a lot of open questions and not so many answers in this field.

Since $u$ is a nilpotent endomorphism, 0 is
its unique eigenvalue and its Jordan form $J(u)$ is completely
described by the lengths of the Jordan blocks, which can be
written as a partition
$\lambda=(\lambda_1,\lambda_2,\ldots,\lambda_k)$ of $n$ where we
order the lengths in non-increasing order, that is
$\lambda_1\geq\lambda_2\geq\ldots\geq \lambda_k>0.$   We put
$J(u):=\lambda.$ Given a partition $\lambda$, the corresponding
Young diagram $Y_\lambda$ is defined to be an array of $k$  rows
of cells starting on the left, with the $i$-th row containing
$\lambda_i$ cells. Put $Y(u):=Y_\lambda$ if $J(u)=\lambda.$

Obviously $\B_u$ depends only on $J(u)$ or equivalently on $Y(u)$
so that we can talk about $\B_u$ for $u$ in a given nilpotent
orbit (under the conjugation by $GL_n(\Co)$).

Given a Young diagram $Y_\lambda$, fill in its boxes with numbers
$1,\ldots,n$ in such a way that the entries increase in rows from
left to right and in columns from top to bottom. Such an array is
called  a Young tableau of shape $\lambda.$ Let us denote by
$\bT_\lambda$ the set of Young tableaux of shape $\lambda.$

By N. Spaltenstein \cite{Sp}, $\B_u$ is an equidimensional
variety and its components are in bijection with $\bT_{J(u)}$.

\subsection{Statement of the main result}

\label{1.2}

Up to now the singularity of the irreducible components of $\B_u$ has been studied
only in three  cases. J.A. Vargas \cite{V} and N. Spaltenstein
\cite{Sp1} described the geometry of the components in the
simplest case of $J(u)=(\lambda_1,1,1\ldots)$ (hook case) and in
particular showed that all the components are nonsingular in that
case. They were also the first to compute an example of a singular
component in the case $J(u)=(2,2,1,1).$ Twenty years later F. Fung
\cite{F} described the geometry of the components in the case of
$J(u)=(\lambda_1,\lambda_2)$ (two-row case). In this case again,
all the components are nonsingular. Finally, the first coauthor
\cite{Fr} gave a  combinatorial criterion for a component to be
singular in the case of $J(u)=(2,2,\ldots)$ (two-column case).

The aim of this paper is to give a full classification of
nilpotent orbits with respect to the singularity of the components
of $\B_u.$ The main result of the paper is
 \begin{maintheorem} {\em Let $u\in{\rm End}(\Co^n)$ be a nilpotent endomorphism.
Then all the irreducible components of $\B_u$ are nonsingular in the
following four cases:
\begin{itemize}
\item[\rm (i)] $J(u)=(\lambda_1,1,\ldots)$ \ (hook case),
\item[\rm (ii)] $J(u)=(\lambda_1,\lambda_2)$ \ (two-row case),
\item[\rm (iii)] $J(u)=(\lambda_1,\lambda_2,1)$ \ (two-row-plus-one-box case),
\item[\rm (iv)] $J(u)=(2,2,2)$ \ (an exceptional case, due to small $n$).
\end{itemize}
In all other cases $\B_u$ admits singular components.}
 \end{maintheorem}

\subsection{Construction of the irreducible components of the Springer fiber}

\label{1.3}

Let us explain Spaltenstein's construction
in some detail. The full details can be found in \cite[\S
II.5]{Sp1}.

Given a partition $\lambda=(\lambda_1,\ldots,\lambda_k)$ let
$\lambda^*=(\lambda_1^*,\lambda_2^*,\ldots,\lambda_{\lambda_1}^*)$
denote the  conjugate partition, that is the list of the lengths
of the columns in $Y_\lambda.$ For a Young tableau $T$ put
$\sh(T)$ to be its shape, that is the corresponding Young diagram.

Given a nilpotent $u\in{\rm End}(\Co^n)$ with $J(u)=\lambda$ let
$(V_0,\ldots,V_n)$ be some flag in $\B_u.$ Note that for any $i\
:\ 1\leq i\leq n$  $Y(u_{|V_i})$ differs from
$Y(u_{|V_{i-1}})$ by exactly one (corner)  box. Thus,  each flag
$(V_0,\ldots,V_n)\in \B_u$  determines  the chain of Young diagrams
$(Y(u_{|V_0}),\ldots, Y(u_{|V_{n-1}}),Y(u)).$

On the other hand for $T\in\bT_{\lambda}$ and $i\ :\ 1\leq i<n$
let $\pi_{1,i}(T)$ be the tableau obtained from $T$ by deleting the
boxes containing the numbers $i+1,\ldots,n$. Comparing
$\sh(\pi_{1,i}(T))$ and $\sh(\pi_{1,i-1}(T))$ one sees that they
differ by one (corner) box, containing $i.$ In such a way every
Young tableau can be regarded as a chain of Young diagrams. Put
$$\F_T:=\{(V_0,\ldots,V_n)\in \B_u\ :\
Y(u_{|V_i})=\sh(\pi_{1,i}(T)),\ 0\leq i\leq n\}.$$
Note that
$$\B_u=\bigsqcup\limits_{T\in\bT_\lambda}\F_T$$  is a partition of
$\B_u$. Moreover, by \cite[\S II.5]{Sp1}, $\F_T\ :\
T\in\bT_\lambda$ are smooth irreducible subvarieties of $\B_u$ and
$$\dim\F_T=\dim\B_u=\sum\limits_{i=1}^{\lambda_1}{\frac{\lambda^*_i(\lambda_i^*-1)} 2}\eqno{(1)}$$
so that $\{\K^T:=\overline\F_T\ :T\in\bT_{\lambda}\}$ are all the irreducible components of
$\B_u.$

\subsection{Outline of the proof of the main theorem}

\label{1.4}

Given $T\in\bT_\lambda$, for $i\ :\ 1\leq i\leq
n-1$, the subtableau $\pi_{1,i}(T)$ is associated to a component
$\K^{\pi_{1,i}(T)}\subset {\mathcal B}_{u'}$, for a nilpotent
$u'\in\mathrm{End}(V')$ such that
$Y(u')=\mathrm{sh}(\pi_{1,i}(T))$. The natural question is what is
the connection between singularities of $\K^T$ and
$\K^{\pi_{1,i}(T)}$. The answer is given by Theorem
\ref{theorem-2.3}:
\begin{theorem-nonumber}
{\em If the component $\K^{\pi_{1,n-1}(T)}$ is singular then the
component $\K^T$ is singular. Moreover, if $n$ lies in the last
column of $T$, then $\K^T$ is singular if and only if
$\K^{\pi_{1,n-1}(T)}$ is singular.}
\end{theorem-nonumber}

The first part of this theorem, claiming that the existing
singularity cannot disappear, is very natural. This part together
with the example of a singular component of shape $(2,2,1,1)$
shows that any nilpotent endomorphism $u$ with at least four
Jordan blocks and at least two of them of length at least 2, that
is such that
$J(u)=(\lambda_1,\lambda_2,\lambda_3,\lambda_4,\ldots)$ where
$\lambda_2\geq 2$, admits a singular component. The only open case
left is the case of a nilpotent (non-hook) orbit with three Jordan
blocks. As we show in section \ref{2.5} there exists a singular component
for the form $(3,2,2)$. Thus again by the theorem, any nilpotent
non-hook endomorphism with three blocks such that the minimal
block is of length at least 2 and the maximal block is of length
at least 3 admits a singular component.

The second (if and only if) part of the theorem is very useful.
The fact that all the components in the two-row case
$(\lambda_1,\lambda_2)$ are nonsingular is obtained in section \ref{3.7}
as its easy corollary. This simplifies drastically the original
proof of F. Fung.

Further in Section 4 we use this part of the theorem to show that
all the components in the case $(\lambda_1,\lambda_2,1)$ (which we
call two-row-plus-one-box case) are nonsingular. But here the
proof is much more involved. We first note that this question is
equivalent to the nonsingularity of the components in the case
$(r,r,1)$. Then we partition  the components of $\B_{(r,r,1)}$
into $r$ classes of equisingular ones. Finally we show that each
class admits a nonsingular component of some very special form.
The proof of nonsingularity of these special components uses the
techniques developed by the first author for computations of the
singularities of the components in the two-column case. The proof
of nonsingularity of the components in the two-row-plus-one-box
case constitutes the most technically involved part of the paper.

\medskip

The body of the paper consists of three parts. In Section 2 we
prove the theorem formulated in \ref{1.4} and provide its first
corollaries.

In Section 3 we develop combinatorial techniques of partitioning
$\B_u$ into classes of equisingular components. We prefer to call
them equinonsingular classes in the given context since our aim is
to show the nonsingularity of the components.

Finally, in Section 4 we show that in the two-row-plus-one-box
case all the components are nonsingular.

To make the paper as self-contained as possible we formulate all the
results we need in the due places.
The reader can find an index of the notation at the end of the paper.

\section{Inducing the singularity of a component}

\subsection{Preliminary observation}

\label{2.2}

Let us start with a preliminary observation. Note that, exactly in
the same way as we have defined $\pi_{1,i}(T)$ in section \ref{1.3}, we
can define $\pi_{i,j}(T)$ for $i\leq j$ to be the tableau with
entries $i,\ldots,j$ obtained from $T$ by deleting the boxes
containing $j+1,\ldots, n$ and removing the boxes $1,\ldots,i-1$
by the procedure of jeu de taquin (cf. \cite{Ful}). In particular,
$\sh(\pi_{i-1,n}(T))$ differs from $\sh(\pi_{i,n}(T))$ by exactly
one corner box obtained by removing $i-1$ by  jeu de taquin. In
such a way one can associate to the tableau $T$ a chain of Young
diagrams
$$(\sh(\pi_{n,n}(T)),\sh(\pi_{n-1,n}(T))\ldots,\sh(\pi_{1,n}(T)))=:(D_1,\ldots,D_n).$$
Put $Sch(T)$ to be the Young tableau obtained from this chain by
putting $i$ in the only new box of $D_i$ compared to $D_{i-1}$.
The tableau $Sch(T)$ is called the Sch\"utzenberger transform of
$T$ (cf. \cite{Ful}).

Respectively, since for any $F=(V_0,\ldots,V_n)\in{\mathcal B}_u$ and any $i\ :
1\leq i\leq n$ we have $u(V_i)\subset V_i$, one can consider the
action of $u$ on $(V_{n}/V_{n},\ldots,V_n/V_{0})$. Respectively
put
$$\F'_T=\{(V_0,\ldots,V_n)\in{\mathcal B}_u\ :\
Y(u|_{(V_n/V_i)})=\sh(\pi_{i+1,n}(T)),\ 0\leq i\leq n\}.$$
Note that $\F_T\ne \F'_T$. However, exactly as in the case of $\F_T$, one
has a partition
$$\B_u=\bigsqcup\limits_{T\in\bT_\lambda}\F'_T.$$
Moreover, $\F'_T\ :\ T\in\bT_\lambda$ are smooth irreducible
subvarieties of $\B_u$ of the same dimension as $\F_T$. Therefore
the irreducible components of ${\mathcal B}_u$ are also obtained
as the closures of the subsets $\F'_T$. By \cite[\S 3.4]{vL}, we
have $\overline\F'_T=\ov\F_T=\K^T$.

Fix a non-degenerate symmetric bilinear form $b(\cdot,\cdot)$ on $V$ for
which $u$ is self-adjoint, and write $W^\perp=\{v\in V:b(v,w)=0\
\forall w\in W\}$ whenever $W\subset V$ is a subspace. The
application ${\mathcal B}_u\rightarrow{\mathcal B}_u$,
$(V_0,\ldots,V_n)\mapsto (V_n^\perp,\ldots,V_0^\perp)$ maps
$\F'_T$ onto $\F_{Sch(T)}$, providing an isomorphism of algebraic
varieties $\K^T\cong \K^{Sch(T)}$.

Note that $Sch(\pi_{i,n}(T))=\pi_{1,n-i+1}(Sch(T))$ (where we
identify $\pi_{i,n}(T)$ with the Young tableau of entries
$1,\ldots,n-i+1$ obtained by replacing $j\rar j-i+1$). In
particular, the results connecting singularities of
$\K^{\pi_{1,n-1}(T)}$ and $\K^T$ can be translated into results
connecting  singularities of $\K^{\pi_{2,n}(T)}$ and $\K^T.$

\subsection{Inductive criterion of singularity}

\label{2.3}

For $T\in\bT_\lambda$ set $T':=\pi_{1,n-1}(T)$ and $Y':=\sh(T')$.
Let ${\mathcal H}_u$ be the variety of $u$-stable hyperplanes
$H\subset V$ and let ${\mathcal H}'_u\subset{\mathcal H}_u$ be the
subset of hyperplanes $H\in{\mathcal H}_u$ such that the Jordan
form of the restriction $u_{|H}\in\mathrm{End}(H)$ corresponds to
the Young diagram $Y(u_{|H})=Y'$. Fix $V'\in{\mathcal H}'_u$ and let
$u':=u_{|V'}\in\mathrm{End}(V')$. Let ${\mathcal B}_{u'}$ be the
variety of $u'$-stable complete flags on $V'$. Let $\K^{T'}\subset
{\mathcal B}_{u'}$ be the irreducible component corresponding to the
tableau $T'$. In these terms we get
\begin{theorem}
\label{theorem-2.3} If the component $\K^{T'}$ is singular, then
the component $\K^T$ is singular. Moreover, if $n$ lies in the
last column of $T$, then $\K^T$ is singular if and only if
$\K^{T'}$ is singular.
\end{theorem}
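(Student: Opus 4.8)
The plan is to realize the relationship between $\K^T$ and $\K^{T'}$ geometrically, via a fibration. First I would introduce the incidence variety
$$\widetilde{\K^T}:=\{((V_0,\ldots,V_n),H)\in\K^T\times{\mathcal H}'_u:V_{n-1}\subset H\}$$
together with its two projections: $p:\widetilde{\K^T}\to\K^T$ forgetting $H$, and $q:\widetilde{\K^T}\to{\mathcal H}'_u$ forgetting the flag. The point is that for each fixed $H\in{\mathcal H}'_u$ the fiber $q^{-1}(H)$ is exactly $\{(V_0,\ldots,V_{n-1},H):(V_0,\ldots,V_{n-1})\in{\mathcal B}_{u_{|H}}\}$ intersected with the locus forcing the shape chain of $T$ on $H$, so it is isomorphic to $\ov{\F_{T'}}$ inside ${\mathcal B}_{u_{|H}}$, i.e. to $\K^{T'}$ for the appropriate identification (here one uses that $\F_T$ is defined by the shape chain, and deleting the last step of the chain of $T$ gives the shape chain of $T'$). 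Since $GL(V)_u$ (the centralizer of $u$) acts transitively on ${\mathcal H}'_u$ — all hyperplanes with the prescribed restricted Jordan type are conjugate — the map $q$ is a locally trivial fibration with fiber $\K^{T'}$; in particular $\widetilde{\K^T}$ is singular if and only if $\K^{T'}$ is singular (smoothness being local and the base ${\mathcal H}'_u$ being smooth).

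Next I would analyze $p:\widetilde{\K^T}\to\K^T$. A flag $(V_0,\ldots,V_n)\in\K^T$ can a priori be completed by several hyperplanes $H\supset V_{n-1}$ lying in ${\mathcal H}'_u$; the number of such completions is governed by how $Y(u_{|V_{n-1}})$ sits inside $Y(u)=\lambda$. Generically on $\F_T$ one has $Y(u_{|V_{n-1}})=Y'=\sh(T')$, and there is a unique completion, so $p$ is birational; this already shows $\widetilde{\K^T}$ is a resolution-type model of $\K^T$, giving the implication "$\K^{T'}$ singular $\Rightarrow$ $\widetilde{\K^T}$ singular $\Rightarrow$ $\K^T$ singular" once one checks that a birational projective morphism from $\widetilde{\K^T}$ onto $\K^T$ with $\widetilde{\K^T}$ singular cannot have $\K^T$ smooth — which is false in general, so this needs more care. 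The correct route for the first implication is rather the reverse: identify a subvariety of $\K^T$ isomorphic to $\K^{T'}$. Concretely, fix $V'\in{\mathcal H}'_u$ and consider $\iota:{\mathcal B}_{u'}\hookrightarrow{\mathcal B}_u$, $(V_0,\ldots,V_{n-1})\mapsto(V_0,\ldots,V_{n-1},V)$. This is a closed immersion, it maps $\K^{T'}$ into $\K^T$, and along $\iota(\K^{T'})$ one can compute the normal directions in ${\mathcal B}_u$: the Zariski tangent space of $\K^T$ at $\iota(x)$ splits as (tangent space of $\K^{T'}$ at $x$) $\oplus$ (a fixed complement of dimension $\dim{\mathcal H}'_u$). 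Hence if $\K^{T'}$ is singular at $x$, then $\K^T$ is singular at $\iota(x)$. I expect the main obstacle to be precisely this tangent-space bookkeeping: showing the splitting is clean, i.e. that the "extra" directions coming from varying $H$ always contribute a smooth factor of the expected dimension and do not interact with the singular directions of $\K^{T'}$. One must verify this using Spaltenstein's explicit description of $\F_T$ and the dimension formula $(1)$ applied to both $\lambda$ and $Y'$, checking $\dim\K^T=\dim\K^{T'}+\codim_{{\mathcal H}_u}?$ — in fact $\dim\K^T-\dim\K^{T'}=\dim{\mathcal H}'_u$ when $n$ lies in a row that is not too special, and this codimension count is what must be reconciled with $(1)$.

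For the "moreover" part, assume $n$ lies in the last column of $T$. Then $Y'$ is obtained from $\lambda$ by removing a box from the last column, and — this is the key combinatorial input — there is then a unique $H\in{\mathcal H}'_u$ above any $V_{n-1}$ occurring in any flag of $\K^T$, because the constraint $Y(u_{|V_{n-1}})=Y'$ is automatically satisfied on all of $\F_T$ and the hyperplane is forced by $V_{n-1}=\ker$(a specific power of $u$ restricted appropriately). More precisely, when $n$ is in the last column, $V_{n-1}$ determines $V_n=V$ trivially but also $V_{n-1}$ itself already has the maximal possible "defect", so $p:\widetilde{\K^T}\to\K^T$ is an isomorphism, not merely birational. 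Combining this with the fibration $q$ (fiber $\K^{T'}$, base the smooth ${\mathcal H}'_u$) gives $\K^T\cong\widetilde{\K^T}$ is smooth iff $\K^{T'}$ is smooth, which is the desired equivalence. So the structure of the proof is: (1) build $\widetilde{\K^T}$ with its two projections; (2) show $q$ is a Zariski-locally-trivial fibration with fiber $\K^{T'}$ using transitivity of the centralizer on ${\mathcal H}'_u$; (3) for the first implication, exhibit $\iota(\K^{T'})\subset\K^T$ and compare Zariski tangent spaces to transport singularity; (4) for the equivalence, show that the last-column hypothesis forces $p$ to be an isomorphism. The hardest step is (3), the tangent-space comparison, and secondarily pinning down exactly when $p$ is an isomorphism in step (4) — both rest on a careful reading of which corner box of $Y'$ gets filled and how that pins down $V_{n-1}$.
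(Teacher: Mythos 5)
Your incidence variety $\widetilde{\K^T}$ is, up to the projection $p$, exactly the open subset $\mathcal{U}=\{(V_0,\ldots,V_n)\in\K^T : V_{n-1}\in\mathcal{H}'_u\}$ that the paper works with, and your map $q$ is the paper's fibration $\Phi:\mathcal{U}\to\mathcal{H}'_u$. So the skeleton of your argument is the same as the paper's. However, there is a conceptual confusion that both creates an artificial difficulty and leaves a genuine gap.

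The confusion is about $p$. Since $(V_0,\ldots,V_n)$ is a \emph{complete} flag, $V_{n-1}$ already has codimension $1$ in $V$; a hyperplane $H$ with $V_{n-1}\subset H$ is forced to equal $V_{n-1}$. Therefore $p$ is not merely ``birational'': it is an open immersion, with image precisely $\mathcal{U}$. Once this is noticed, your entire step (3) — the search for a closed immersion $\iota(\K^{T'})\hookrightarrow\K^T$ and the ``tangent-space bookkeeping'' splitting argument that you flag as the hardest step — is superfluous. What one needs is only that (a) $\mathcal{U}\cong\widetilde{\K^T}$ is a Zariski-locally-trivial bundle over the smooth base $\mathcal{H}'_u$ with fiber $\K^{T'}$, so $\mathcal{U}$ is singular iff $\K^{T'}$ is, and (b) $\mathcal{U}$ is a nonempty \emph{open} subset of $\K^T$ (because $\F_T\subset\mathcal{U}$ and $\K^T=\overline{\F_T}$), so a singular point of $\mathcal{U}$ is a singular point of $\K^T$. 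This gives the first implication cleanly. The tangent-space splitting you propose is in fact implied by the product structure of the fibration and would require exactly the local triviality you already invoked, so it adds nothing; by contrast, attempting to prove it directly from Spaltenstein's description of $\F_T$ would be a significant and unnecessary detour.

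Your treatment of the ``moreover'' direction also has a gap. You argue via ``uniqueness of $H$ over $V_{n-1}$,'' but that uniqueness is trivial and holds always; what matters is \emph{surjectivity} of $p$, i.e.\ whether $\mathcal{U}=\K^T$, equivalently whether every flag in $\K^T$ (not just in $\F_T$) satisfies $Y(u_{|V_{n-1}})=Y'$. The right reason is the one the paper gives: with $k_n$ the column of $T$ containing $n$, one has $\mathcal{H}'_u=\{H:\ker u^{k_n-1}\subset H,\ \ker u^{k_n}\not\subset H\}$. When $n$ sits in the last column of $T$, $\ker u^{k_n}=V$, so the second condition is vacuous and $\mathcal{H}'_u=\{H:\ker u^{k_n-1}\subset H\}$ is \emph{closed} in $\mathcal{H}_u$; hence $\mathcal{U}$ is both open and closed in the irreducible $\K^T$, forcing $\mathcal{U}=\K^T$. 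Without this observation your equivalence is unproved. Finally, a small technical point: transitivity of $Z(u)$ on $\mathcal{H}'_u$ does not by itself yield Zariski-local triviality of $q$; the paper's use of Schubert cells (an open Bruhat cell $U_H\cdot H\subset\mathcal{H}'_u$ and the section $\psi:Q\to Z(u)$) is what makes the trivialization work in the Zariski topology, and you should include some such argument.
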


\begin{proof}
First, note that a hyperplane $H\subset V$ is $u$-stable if and
only if it contains $\mathrm{Im}\,u$. Thus, ${\mathcal H}_u$
corresponds to the variety of hyperplanes of the space
$W:=V/\mathrm{Im}\,u$. Denote it by ${\mathcal H}(W)$.

We introduce some algebraic groups. Put $P=\{g\in GL(V):g(\ker
u^k)=\ker u^k\ \forall k\geq 0\}$. This is a parabolic subgroup of
$GL(V)$. Let $\zeta:V\rightarrow W$ be the natural surjection.
Define $Q=\{g\in GL(W):g(\zeta(\ker u^k))=\zeta(\ker u^k)\ \forall
k\geq 0\}$. This is a parabolic subgroup of $GL(W)$. Let $k_n$ be
the number of the column of $T$ containing $n$. Note that
${\mathcal H}'_u$ is the set of hyperplanes $H\subset V$ such that
$H\supset \ker u^{k_n-1}$, $H\not\supset \ker u^{k_n}$, hence it
can be identified  with a $Q$-orbit of ${\mathcal
H}_u\cong{\mathcal H}(W)$. In particular, ${\mathcal H}'_u$ is a
locally closed subset in ${\mathcal H}_u$, irreducible and
nonsingular.

Define ${\mathcal U}=\{(V_0,\ldots,V_n)\in \K^T:
V_{n-1}\in{\mathcal H}'_u\}$. Then this is a locally closed subset
of the component $\K^T$. Moreover, by definition, ${\mathcal
U}\supset \F_T$. Since $\K^T=\overline{\F_T}$, it follows that
${\mathcal U}$ is actually a nonempty open subset of $\K^T$. In
addition, if $n$ lies in the last column of $T$, then $\ker
u^{k_n}=V$. In this case ${\mathcal H}'_u=\{H\in{\mathcal
H}_u:H\supset \ker u^{k_n-1}\}$, and ${\mathcal H}'_u$ is actually
a closed subset of ${\mathcal H}_u$ so that ${\mathcal U}=\K^T$.
Therefore, to prove the theorem, it is sufficient to show that
${\mathcal U}$ is nonsingular if and only if $\K^{T'}$ is
nonsingular.

To do this, we consider the map
\[\Phi:{\mathcal U}\rightarrow
{\mathcal H}'_u,\ (V_0,\ldots,V_n)\mapsto V_{n-1},\] and we show
that $\Phi$ is an algebraic fiber bundle with fiber isomorphic to
$\K^{T'}$.

Set $Z(u):=\{h\in GL(V):huh^{-1}=u\}$ to be the stabilizer of $u$.
This is a connected, closed subgroup of $GL(V)$, and it acts on
${\mathcal B}_u$, thus, leaving invariant each component. Also
${\mathcal U}$ is stable by this action. In addition $Z(u)$
naturally acts on ${\mathcal H}_u$ and ${\mathcal H}'_u$, and the
map $\Phi$ is $Z(u)$-equivariant. We have $Z(u)\subset P$ and a
natural morphism of algebraic groups $\varphi:Z(u)\rightarrow Q$,
and the action of $Z(u)$ on ${\mathcal H}_u$ commutes with the map
$\varphi$ and the action of $Q$. Actually, we can see that there
is a morphism of algebraic groups $\psi:Q\rightarrow Z(u)$ such
that $\varphi\circ\psi=id_Q$ (see for example \cite[\S 3.6]{Fr1}),
so that $Q$ can be interpreted as a subgroup of $Z(u)$. In
particular, ${\mathcal H}'_u$ is a $Z(u)$-orbit of ${\mathcal
H}_u$.

First we show that $\Phi$ is locally trivial. Let $H\in {\mathcal
H}'_u$. By Schubert decomposition, there is a Borel subgroup
$B_H\subset Q$ with a unipotent subgroup $U_H\subset B_H$ such
that the map $U_H\stackrel{\sim}{\rightarrow}{\mathcal
O}_H\subset{\mathcal H}'_u$, $g\mapsto gH$ is an open immersion.
Therefore, the map $U_H\times \Phi^{-1}(H)\rightarrow
\Phi^{-1}({\mathcal O}_H)$,
$(g,(V_0,\ldots,V_n))\mapsto(\psi(g)V_0,\ldots,\psi(g)V_n)$ is an
isomorphism of algebraic varieties, and $\Phi$ is trivial over
${\mathcal O}_H$.

It remains to show that $\Phi^{-1}(H)\cong \K^{T'}$. Since
${\mathcal U}$ is irreducible, it follows that $\Phi^{-1}(H)$ is
irreducible. Let ${\mathcal B}_{u_{|H}}$ be the variety of $u$-stable
complete flags on $H$. The natural inclusion $\Psi:{\mathcal B}_{u_{|H}}\rightarrow
{\mathcal B}_u$ is a closed immersion. We have $\Psi(\F_{T'})\subset
\F_T\cap \Phi^{-1}(H)$, hence $\Psi(\K^{T'})$ is a closed subset
of $\Phi^{-1}(H)$.

Let $\lambda_{k}^*$ be the length of the $k$-th column of $T$. By
formula $(1)$ we obtain $\dim \K^{T'}=\dim \K^T-(\lambda_{k_n}^*-1)$. Also,
$\dim W/\zeta(\ker u^{k_n-1})=\lambda_{k_n}^*$ so that $\dim
{\mathcal H}'_u=\lambda_{k_n}^*-1$. Thus, $\dim \Phi^{-1}(H)=\dim
{\mathcal U}-\dim {\mathcal H}'_u=\dim \K^{T'}$, and we get
finally $\Psi(\K^{T'})=\Phi^{-1}(H)$. The proof of the theorem is
now complete.  
\end{proof}

As a straightforward corollary of Theorem \ref{theorem-2.3} we get

\begin{corollary}
\label{corollary-2.4}
If $\K^T$ is a component such that $\K^{\pi_{i,j}(T)}$ is singular
for some $i,j \ :\ 1\leq i<j\leq n$ and $(i,j)\ne(1,n)$ then
$\K^T$ is singular.
\end{corollary}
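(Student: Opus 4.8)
The plan is to reduce Corollary \ref{corollary-2.4} to Theorem \ref{theorem-2.3} by an induction on $j-i$, using the ``preliminary observation'' of section \ref{2.2} that allows us to pass symmetrically from deleting the largest entry to deleting the smallest entry. The point is that Theorem \ref{theorem-2.3} compares $\K^T$ with $\K^{\pi_{1,n-1}(T)}$, i.e.\ with the component obtained by removing only the last box; we must leverage this to handle an arbitrary window $\pi_{i,j}(T)$.

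First I would treat the case $j=n$, $i\geq 2$. Here $\pi_{i,n}(T)$ is obtained from $T$ by successively applying jeu de taquin to remove the entries $1,2,\ldots,i-1$. By the identity $Sch(\pi_{i,n}(T))=\pi_{1,n-i+1}(Sch(T))$ recorded in section \ref{2.2} (together with the isomorphism $\K^S\cong\K^{Sch(S)}$ for every tableau $S$), singularity of $\K^{\pi_{i,n}(T)}$ is equivalent to singularity of $\K^{\pi_{1,n-i+1}(Sch(T))}$. Now $\pi_{1,n-i+1}(Sch(T))$ is a single $\pi_{1,\ast}$-truncation of $Sch(T)$, so iterating the first part of Theorem \ref{theorem-2.3} exactly $i-1$ times along the chain
$$Sch(T)\ \supset\ \pi_{1,n-1}(Sch(T))\ \supset\ \cdots\ \supset\ \pi_{1,n-i+1}(Sch(T))$$
shows that if $\K^{\pi_{1,n-i+1}(Sch(T))}$ is singular then $\K^{Sch(T)}$ is singular, hence so is $\K^T$. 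This disposes of all pairs $(i,n)$ with $i\geq 2$.

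Next, for a general pair $(i,j)$ with $j<n$, I would first move $j$ down to the top by the symmetric (Sch\"utzenberger) argument, or more directly, iterate the first part of Theorem \ref{theorem-2.3} to pass from $\K^{\pi_{1,n-1}(T)},\ldots,\K^{\pi_{1,j}(T)}$ upward: precisely, applying Theorem \ref{theorem-2.3} repeatedly gives that singularity of $\K^{\pi_{1,j}(T)}$ forces singularity of $\K^T$. This reduces the problem to the case where the window already reaches the right end of the (truncated) tableau, namely to showing that singularity of $\K^{\pi_{i,j}(T)}$ implies singularity of $\K^{\pi_{1,j}(T)}$; but $\pi_{i,j}(T)=\pi_{i,j}(\pi_{1,j}(T))$, so after relabelling this is exactly the case $j'=n'$, $i\geq 2$ already handled in the previous paragraph (applied to the tableau $\pi_{1,j}(T)$ of size $j$). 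Composing the two reductions yields the corollary; the hypothesis $(i,j)\neq(1,n)$ is used only to guarantee that at least one truncation step is actually performed, so that there is something to prove.

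The main obstacle is bookkeeping rather than a genuine difficulty: one must make sure that each intermediate tableau $\pi_{1,m}(T)$ (or $\pi_{1,m}(Sch(T))$) genuinely indexes a component of a Springer fiber $\B_{u''}$ for an appropriate nilpotent $u''$ on a smaller space, so that Theorem \ref{theorem-2.3} applies verbatim at each step; this is exactly the setup recalled in section \ref{1.4} and in the statement of Theorem \ref{theorem-2.3}, where $Y'=\sh(T')$ is realized as $Y(u_{|V'})$ for $V'\in\mathcal{H}'_u$. The only subtlety worth spelling out is the compatibility $\pi_{i,j}(T)=\pi_{i,j}(\pi_{1,j}(T))$ and $Sch(\pi_{i,n}(T))=\pi_{1,n-i+1}(Sch(T))$, both of which are standard facts about jeu de taquin and the Sch\"utzenberger involution and are already invoked in section \ref{2.2}; with these in hand the corollary follows formally.
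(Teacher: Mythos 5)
Your proposal is correct and uses exactly the same two ingredients as the paper's proof: iterating the first part of Theorem \ref{theorem-2.3} to strip off largest entries, and the Sch\"utzenberger observation of section \ref{2.2} (via $Sch(\pi_{i,n}(T))=\pi_{1,n-i+1}(Sch(T))$ and $\K^S\cong\K^{Sch(S)}$) to strip off smallest entries. The paper merely packages the same reduction as an induction on $n$, peeling off one box at a time from either end, whereas you unroll the iteration explicitly; there is no substantive difference.
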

\begin{proof}
If $\K^{\pi_{i,j}(T)}$ is singular and $(i,j)\ne(1,n)$ then either
$i>1$ or $j<n$. If $j<n$ then by induction hypothesis
$\K^{\pi_{1,n-1}(T)}$ is singular, so that, by Theorem \ref{theorem-2.3},
$\K^T$ is singular.

If $j=n$ then by induction hypothesis $\K^{\pi_{2,n}(T)}$ is
singular so that by Theorem \ref{theorem-2.3} and subsection \ref{2.2},
$\K^T$ is singular.  
\end{proof}

\subsection{Construction of singular components}

\label{2.5}

The first example of a singular component was obtained by Vargas
\cite{V} and Spaltenstein \cite{Sp1} and it is
$\K^S\in\B_{(2,2,1,1)}$ where
$$S=\begin{array}{ll}
1&3\\
2&5\\
4& \\
6& \\
\end{array}.
$$
Moreover, by \cite{Fr} this is the only singular component in the
case $(2,2,1,1).$

Let us show
\begin{proposition}
\label{proposition-singcomponent-322}
The component $\K^T\in\B_{(3,2,2)}$ associated to the tableau
$$T=\begin{array}{lll}
1&2&5\\
3&4&\\
6&7\\
\end{array}$$
is singular.
\end{proposition}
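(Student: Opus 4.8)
We plan to argue as follows. Since $\K^T=\ov\F_T$, it is enough to exhibit one flag $F_0\in\F_T$ at which $\K^T$ is not smooth, and the natural candidate is the ``most degenerate'' coordinate flag. Fix a Jordan basis $e_1,e_2,e_3,f_1,f_2,g_1,g_2$ of $V=\Co^7$ adapted to $J(u)=(3,2,2)$, so that $ue_3=e_2$, $ue_2=e_1$, $ue_1=0$, $uf_2=f_1$, $uf_1=0$, $ug_2=g_1$, $ug_1=0$; then $\ker u=\langle e_1,f_1,g_1\rangle$, $\ker u^2=\langle e_1,e_2,f_1,f_2,g_1,g_2\rangle$, $\mathrm{Im}\,u=\langle e_1,e_2,f_1,g_1\rangle$ and $\mathrm{Im}\,u^2=\langle e_1\rangle$. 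Reading off $\sh(\pi_{1,i}(T))=(1),(2),(2,1),(2,2),(3,2),(3,2,1),(3,2,2)$ for $i=1,\dots,7$, one sees that a flag in $\F_T$ satisfies $V_1\subset\ker u$, $V_2,V_3,V_4\subset\ker u^2$, $\ker u\subset V_6$ and $\dim(V_j\cap\ker u)=2$ for $j=3,4,5$; for $F_0$ one takes the coordinate flag $V_i=\langle a_1,\dots,a_i\rangle$ relative to the ordered basis $(a_1,\dots,a_7)=(e_1,e_2,f_1,f_2,e_3,g_1,g_2)$, which indeed lies in $\F_T$. Note that $V_1=\langle e_1\rangle=\mathrm{Im}\,u^2$; since $\F'_T$ requires $Y(u_{|V/V_1})=\sh(\pi_{2,n}(T))=(3,2,1)$ whereas here $Y(u_{|V/V_1})=(2,2,2)$, the flag $F_0$ lies outside $\F'_T$ — it sits in the ``boundary'' of the component, where one expects the singularity.

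The core step is to realise a neighbourhood of $F_0$ in $\K^T$ as an explicit affine algebraic set, by the matrix-coordinate technique of the first author in the two-column case (cf.\ \cite{Fr}): parametrise the big cell of $\B$ around $F_0$ by the free entries of echelon matrices for $V_1,\dots,V_6$, impose the conditions cutting out $\ov\F_T$ (the inequalities $\dim(u^aV_i\cap\ker u^b)\ge c^T_{a,b,i}$ together with the dual ones coming from $\F'_T$ and $\mathrm{Im}\,u$), discard those that are automatic on the big cell for dimension reasons — e.g.\ $\dim(V_i\cap\ker u^2)\ge i-1$ for $i=5,6$, since $\dim\ker u^2=6$ — and eliminate the redundant parameters. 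The expected outcome is a local isomorphism of $\K^T$ near $F_0$ with $\Co^r\times\{q=0\}$, where $q$ is a nondegenerate quadratic form on $\Co^m$ (the affine cone over a smooth quadric) and $F_0$ corresponds to the vertex $\Co^r\times\{0\}$. Granting this, $\dim\K^T=6$ by formula $(1)$ forces $r+m-1=6$, whereas $\dim T_{F_0}(\Co^r\times\{q=0\})=r+m=7$, so $\K^T$ is singular at $F_0$. This is the $(3,2,2)$-analogue of the singular $(2,2,1,1)$-component of \cite{V,Sp1}.

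The main obstacle is the faithful bookkeeping of this core step. One must (i) write down \emph{all} the rank conditions defining $\ov\F_T$ and check that, localised at $F_0$, they reduce to a single quadric — this is where the coincidence $V_1=\langle e_1\rangle=\mathrm{Im}\,u^2$ does the work: it forces two incidences of $V_2,\dots,V_5$ with the pair $\ker u\subset\ker u^2$ that are independent for a nearby flag of $\F'_T$ to collapse onto one another, and the surviving quadric is precisely the obstruction to separating them again; and (ii) verify that the affine set so obtained is irreducible of dimension $6$, hence really is a chart of $\K^T=\ov\F_T$ near $F_0$ rather than a component of a larger rank variety — this is controlled by the dimension count from formula $(1)$. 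A cheaper variant that avoids producing the whole chart is to compute $\dim_\Co T_{F_0}\K^T$ directly inside $T_{F_0}\B=\bigoplus_i\mathrm{Hom}(V_i/V_{i-1},V/V_i)$, by intersecting with the linear spaces obtained from the derivatives at $F_0$ of the defining conditions of $\ov\F_T$, and checking that the result has dimension $7>6$.
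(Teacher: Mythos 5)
There is a genuine gap at the very first step: the point you propose to test is automatically a nonsingular point of $\K^T$. By Spaltenstein's construction (quoted in \S\ref{1.3}), $\F_T$ is a \emph{smooth, locally closed} subvariety of $\B_u$ with $\K^T=\ov{\F_T}$; a locally closed set is open in its closure, so $\F_T$ is a smooth open subset of $\K^T$ and every flag of $\F_T$ is a smooth point of the component. Your flag $F_0$ built on the ordered basis $(e_1,e_2,f_1,f_2,e_3,g_1,g_2)$ does lie in $\F_T$ (your shape computation is correct), so no singularity can be detected there. The observation that $F_0\notin\F'_T$ does not help: $\F'_T$ is likewise open and smooth in $\K^T$, and singular points must lie outside \emph{both} cells, i.e.\ in $\K^T\setminus\F_T$. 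This is exactly what the paper does: its base point is the coordinate flag for a Jordan basis ordered so that $V_2\subset\ker u$, whence $Y(u_{|V_2})=(1,1)\ne\sh(\pi_{1,2}(T))=(2)$; that flag belongs to a different Spaltenstein cell and only to the boundary $\K^T\setminus\F_T$.

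Beyond the choice of point, the engine of your argument is also problematic. Writing "all the rank conditions defining $\ov{\F_T}$" presupposes an explicit set of local equations for the closure of a Spaltenstein cell, which is not available in general and is precisely the hard part; and your "cheaper variant" runs in the wrong logical direction: intersecting the kernels of the differentials of \emph{necessary} conditions only yields an \emph{upper} bound on $\dim T_{F_0}\K^T$, whereas singularity requires the \emph{lower} bound $\dim T_{F_0}\K^T\ge 7>6=\dim\K^T$. The paper obtains the lower bound without any equations: it exhibits an explicit $6$-parameter family $f:\Co^6\to\K^T$ (landing in $\F_T$ for generic parameters, hence in $\K^T$ by taking closures) passing through the chosen boundary point, and produces seven curves in this family whose tangent vectors at that point are linearly independent. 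If you want to repair your proof, replace $F_0$ by a flag in $\K^T\setminus\F_T$ and replace the equation-based step by an explicit construction of curves in $\F_T$ degenerating to it.
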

\begin{proof}
Fix a Jordan basis $(e_1,\ldots,e_7)$ of $\Co^7$ such that $u$
acts on the basis by
\[
e_7\mapsto e_4\mapsto e_1\mapsto 0 \qquad e_5\mapsto e_2\mapsto 0
\qquad e_6\mapsto e_3\mapsto 0.
\]
For $i=0,\ldots,7$ let $V_i=\mathrm{Span}\{ e_1,\ldots,e_i\}$, and
consider the flag $F_0=(V_0,\ldots,V_7)$. Let $B\subset
GL(\mathbb{C},7)$ be the subgroup of lower triangular matrices and
let $U\subset B$ be the subgroup of unipotent matrices. Let
$\Omega$ be the $B$-orbit of $F_0$ in the variety of complete
flags. The map $U\rightarrow \Omega$, $g\mapsto gF_0$ is an
isomorphism of algebraic varieties. Let ${\mathcal X}\subset
M(\mathbb{C},7)$ be the subspace of nilpotent lower triangular
matrices. Let $E_{i,j}\in M(\mathbb{C},7) $ be the standard basic
matrix
 $$(E_{i,j})_{k,l}=\left\{\begin{array}{ll}
 1&{\rm \ if}\ (k,l)=(i,j)\\
 0&{\rm \ otherwise.}\\
\end{array}\right.$$
The set $\{(E_{i,j})\}_{1\leq j<i\leq 7}$  forms a (standard)
basis of ${\mathcal X}$.
The map ${\mathcal X}\rightarrow U$,
$g\mapsto g+I_7$ is an isomorphism of algebraic varieties. Both
isomorphisms combined provide an isomorphism $\varphi:{\mathcal
X}\rightarrow \Omega$, $g\mapsto (g+I_7)F_0$, and $\varphi$
reduces to an isomorphism $\varphi^{-1}(\Omega\cap {
\K}^T)\rightarrow \Omega\cap { \K}^T$.

Consider the map $f:\mathbb{C}^6\rightarrow {\mathcal X}$ defined
by
\[f:(t_1,\ldots,t_6)\mapsto
\left(\begin{array}{ccccccc}
0 & 0 & 0 & 0 & 0 & 0 & 0 \\
t_1 & 0 & 0 & 0 & 0 & 0 & 0 \\
t_1t_2 & t_2+t_3 & 0 & 0 & 0 & 0 & 0 \\
0 & t_3t_4t_5 & t_4t_5 & 0 & 0 & 0 & 0 \\
0 & t_1t_3t_4t_5 & t_1t_4t_5 & t_4 & 0 & 0 & 0 \\
0 & t_1t_2t_3t_4t_5 & t_1t_2t_4t_5 & t_2t_4 & t_2+t_6 & 0 & 0 \\
0 & 0 & 0 & 0 & t_5t_6(t_4-t_1) & t_5(t_4-t_1) & 0 \\
\end{array}\right).\]
A straightforward computation shows that $\varphi\circ
f(t_1,t_2,t_3,t_4,t_5,t_6)\in\Omega\cap\F_T$ whenever $t_3$,
$t_4$, $t_5$, $t_6$, $t_4-t_1$ are all nonzero. Thus,
$f(\mathbb{C}^6)\subset \varphi^{-1}(\Omega\cap \K^T)$. In
particular $0=f(0)\in\varphi^{-1}(\Omega\cap { \K}^T)$.

The tangent space of $\varphi^{-1}(\Omega\cap { \K}^T)$ at $0$,
denoted by ${\mathcal T}_0\,\varphi^{-1}(\Omega\cap { \K}^T)$, can
be seen as a vector subspace of ${\mathcal X}$. For
$i=1,\ldots,6$, let $\iota_i:\mathbb{C}\rightarrow \mathbb{C}^6$,
$t\mapsto (t_1,\ldots,t_6)$ be the map defined by $t_i=t$, and
$t_j=0$ for $j\not=i$. The curve
$\{f(\iota_i(t)):t\in\mathbb{C}\}$ lies in
$\varphi^{-1}(\Omega\cap \K^T)$. Considering for each $i$ the
tangent vector at $0$ to this curve, we get:
\[E_{2,1},\ E_{3,2},\ E_{5,4},\ E_{6,5}\in {\mathcal T}_0\,\varphi^{-1}(\Omega\cap { \K}^T).\]
The curves $\{f(t,1,-1,0,0,-1):t\in\mathbb{C}\}$,
$\{f(0,1,-1,t,0,-1):t\in\mathbb{C}\}$ and
$\{f(0,0,0,t,1,0):t\in\mathbb{C}\}$ also pass through $0$ for
$t=0$. Considering the tangent vectors at $0$ to these curves, we
get:
\[E_{2,1}+E_{3,1},\ E_{5,4}+E_{6,4},\ E_{4,3}+E_{5,4}+E_{7,6}\in {\mathcal T}_0\,\varphi^{-1}(\Omega\cap { \K}^T).\]
We have constructed seven vectors of the tangent space ${\mathcal
T}_0\,\varphi^{-1}(\Omega\cap { \K}^T)$, and we see that all these
vectors are linearly independent. It follows
\[\mathrm{dim}\,{\mathcal T}_0\,\varphi^{-1}(\Omega\cap { \K}^T)\geq 7.\]
On the other hand by $(1)$  $\dim \K^T=6$ so that
\[\mathrm{dim}\,\varphi^{-1}(\Omega\cap { \K}^T)=\mathrm{dim}\,\Omega\cap { \K}^T
=\mathrm{dim}\,{ \K}^T=6.\] It follows that
$\varphi^{-1}(\Omega\cap { \K}^T)$ is singular. Thus, $\Omega\cap
{\mathcal \K}^T$ is singular. This is an open subset of ${ \K}^T$.
Therefore, the component ${ \K}^T$ is singular.  
\end{proof}

\begin{remark}
Checking other components of $\B_{(3,2,2)}$ one can see that all
the other components but ${\mathcal K}^T$ given above are
nonsingular. Note that by $(1)$  $\dim \K^S=6+1$ so that $\K^T$ is a 
singular component of a smaller dimension than
$\K^S$.

In fact, since all the components in the hook case are
nonsingular as well as the components in the two-row case and as
we show also the components in the two-row-plus-one-box case, all
the components of dimension smaller than 6 are nonsingular, so
that $\K^T$ is a singular component of the minimal
dimension. Also since all the components of $(2,2,2)$ are
nonsingular there are no singular components of dimension 6 in
$GL_n$ for $n\leq 6.$

Of course from this example one can obtain a singular
component of dimension $6$ for any $n\geq 7$ simply  by taking
$T'$ obtained from $T$ by adding
$8,9,\ldots,n$ to the first row (one has by $(1)$ that $\dim
\K^{T'}=\dim \K^T=6$ and by Theorem \ref{theorem-2.3} that
$\K^{T'}$ is singular).
\end{remark}

As a corollary of the above constructions and Corollary \ref{corollary-2.4}
we get
\begin{proposition}
\label{proposition-2.6}
Let $\B_u$ be a Springer fiber. Let
$J(u)=\lambda=(\lambda_1\geq \lambda_2\geq\ldots\geq\lambda_k\geq
1).$ If $\lambda_2\geq 2$ and either $k\geq 4$ or $k=3$ and
$\lambda_1\geq 3,\ \lambda_3\geq 2$ then $\B_u$ admits a singular
component.
\end{proposition}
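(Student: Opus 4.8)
The plan is to deduce Proposition \ref{proposition-2.6} directly from the two concrete singular examples already in hand (the component $\K^S\in\B_{(2,2,1,1)}$ and the component $\K^T\in\B_{(3,2,2)}$ of Proposition \ref{proposition-singcomponent-322}) together with the purely combinatorial Corollary \ref{corollary-2.4}. The key observation is that Corollary \ref{corollary-2.4} lets us propagate singularity \emph{upward}: if a tableau $R$ of shape $\lambda$ admits a window $\pi_{i,j}(R)$ (with $(i,j)\neq(1,n)$) whose associated component in the smaller Springer fiber is one of these two known singular components, then $\K^R$ is singular, hence $\B_u$ has a singular component. So the whole proof reduces to a combinatorial construction: given $\lambda$ in either of the two families described in the statement, exhibit a tableau $R\in\bT_\lambda$ and indices $i<j$ with $(i,j)\neq(1,n)$ such that $\sh(\pi_{i,j}(R))$ equals $(2,2,1,1)$ and $\pi_{i,j}(R)=S$ (up to the relabelling $k\mapsto k-i+1$), respectively $\sh(\pi_{i,j}(R))=(3,2,2)$ and $\pi_{i,j}(R)=T$.

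First I would handle the case $k\geq 4$ with $\lambda_2\geq 2$. Here $Y_\lambda$ contains the diagram $(2,2,1,1)$ as a subdiagram sitting in the first two columns and first four rows. The idea is to fill the first four rows and first two columns of $Y_\lambda$ with a shifted copy of $S$ and fill everything else in any legal way (say, the remaining cells in reading order, row by row). More precisely, one places the entries $m+1,m+2,m+3,m+4,m+5,m+6$ (for a suitable offset $m$) in the subdiagram $(2,2,1,1)$ according to the pattern of $S$, after first filling the cells "below and to the left" — but since $S$ already occupies the top-left corner, the cleanest choice is to use the \emph{initial} window: set $i=1$ and choose $R$ so that its restriction to entries $1,\dots,6$ has shape $(2,2,1,1)$ and equals $S$. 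Because $\lambda_2\geq 2$ there is room for two columns, and because $k\geq 4$ there is room for four rows, so such an $R$ exists; then $j=6<n$ (strict since $n=|\lambda|\geq 7$ when $k\geq4$ and $\lambda_2\geq 2$, indeed $n\geq 2+2+1+1=6$, and if $n=6$ then $\lambda=(2,2,1,1)$ exactly and $\K^S$ itself is the sought singular component). Thus $(i,j)=(1,6)\neq(1,n)$ and Corollary \ref{corollary-2.4} applies.

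Next I would handle the case $k=3$, $\lambda_1\geq 3$, $\lambda_3\geq 2$ (hence also $\lambda_2\geq 2$). Now $Y_\lambda$ contains $(3,2,2)$ as the subdiagram formed by the first three columns and all three rows (using $\lambda_1\geq 3$, $\lambda_2\geq 2$, $\lambda_3\geq 2$). I would construct $R\in\bT_\lambda$ whose window on entries $1,\dots,7$ has shape $(3,2,2)$ and equals $T$: place $1,2,5$ in the first row's first three cells, $3,4$ in the second row's first two cells, $6,7$ in the third row's first two cells, and fill the remaining $n-7$ cells with $8,\dots,n$ in reading order — this is a legal tableau since the filling of the window is legal and what remains is a "skew-free" tail extending rows to the right and columns downward. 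Here $n=|\lambda|\geq 3+2+2=7$; if $n=7$ then $\lambda=(3,2,2)$ and $\K^T$ itself works, and if $n>7$ then $(i,j)=(1,7)\neq(1,n)$, so Corollary \ref{corollary-2.4} again gives that $\K^R$ is singular.

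The only genuinely delicate point — and the step I expect to need the most care — is verifying that the restriction $\pi_{1,j}(R)$ really does coincide with $S$ (resp.\ $T$) as a \emph{tableau}, not merely that the shape is right. Since $\pi_{1,j}$ is just "delete the boxes with entries $>j$" (no jeu de taquin is involved for an initial window), this amounts to checking that the cells of $R$ containing $1,\dots,6$ (resp.\ $1,\dots,7$) occupy exactly the cells of the subdiagram $(2,2,1,1)$ (resp.\ $(3,2,2)$) and carry exactly the entries of $S$ (resp.\ $T$). This is immediate by construction provided we are careful that inserting $8,\dots,n$ into the remaining cells does not force any of $1,\dots,7$ out of the intended positions — which it cannot, since those positions form a Young subdiagram in the top-left and all larger entries go to cells strictly outside it. I would also note explicitly the boundary cases $n=6$ and $n=7$ where the ambient fiber \emph{is} $\B_{(2,2,1,1)}$ or $\B_{(3,2,2)}$ and the statement holds by Proposition \ref{proposition-singcomponent-322} and \cite{Fr}/\cite{V,Sp1} directly, so that the application of Corollary \ref{corollary-2.4} is only needed when $n$ is strictly larger.
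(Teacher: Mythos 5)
Your argument is essentially the paper's own proof: in both cases you use Corollary \ref{corollary-2.4} to lift the singularity of $\K^S\subset\B_{(2,2,1,1)}$ (for $k\geq 4$) and of $\K^T\subset\B_{(3,2,2)}$ (for $k=3$, $\lambda_1\geq 3$, $\lambda_3\geq 2$) by exhibiting a tableau in $\bT_\lambda$ whose initial window $\pi_{1,6}$ resp.\ $\pi_{1,7}$ equals $S$ resp.\ $T$. The extra care you take about the existence of the extension and the boundary cases $n=6,7$ is sound but not a different argument.
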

\begin{proof}
Indeed, if $k\geq 4$ then $\lambda_1\geq 2,\lambda_2\geq
2,\lambda_3\geq 1,\lambda_4\geq 1$. Taking any
$T'\in\bT_{\lambda}$ such that $\pi_{1,6}(T')=S$ with $S$ in section \ref{2.5},
we get by Corollary \ref{corollary-2.4} that $\K^{T'}$ is singular.

If $k=3$ then $\lambda_1\geq 3,\ \lambda_2,\lambda_3\geq 2.$
Taking any $T'\in\bT_{\lambda}$ such that $\pi_{1,7}(T')=T$ with $T$ as in
Proposition \ref{proposition-singcomponent-322}, we get by Corollary \ref{corollary-2.4} that $\K^{T'}$ is
singular.  
\end{proof}

\section{Combinatorics of equinonsingular components}

\subsection{Definition and notation}

\label{3.0}

Given any standard tableaux $T,S$ we call the components $\K^T$, $\K^S$
equinonsingular if they are either both singular or both
nonsingular.

For example, let $T$ be a tableau with $n$ in the last column,
then, by Theorem \ref{theorem-2.3}, $\K^T$ and $\K^{\pi_{1,n-1}(T)}$ are
equinonsingular.

In a few following subsections we will construct equinonsingular
components of the same Springer fiber. But before we need to set
combinatorial notation. Let $\lambda$ be a partition of $n$ and
$\lambda^*=( \lambda_1^*,\ldots,\lambda_m^*)$ be its conjugate
partition. For $i,j\ :\  1\leq i\leq j\leq m$ put
$\lambda_{[i,j]}$ to be the partition conjugate to
$(\lambda_i^*,\ldots,\lambda_j^*)$.  For $i\ :\  1\leq i< m$ put
$\varsigma_i:=\sum_{j=1}^i\lambda_j^*$. Note that
$\lambda_{[1,i]}$ is a partition of $\varsigma_i.$

Let $T\in\bT_\lambda$ where
$\lambda^*=(\lambda_1^*,\ldots,\lambda_m^*)$. Put $(T)_{i,j}$ to
be the entry in the $i$-th row and $j$-th column of $T$. Put
$T_j:=((T)_{1,j},\ldots,(T)_{\lambda_j^*,j})$ to be the $j$-th
column of $T$. We will write
 $T=(T_1,\ldots,T_m)$. For $i,j\ :\
1\leq i\leq j\leq m$ put $T_{[i,j]}:=(T_i,\ldots,T_j)$ to be the
subtableau consisting of columns $i,\ldots,j$. We will also write
$S=(P_1,P_2,\ldots)$  when $S$ is a concatenation of subtableaux $
P_1,P_2,\ldots$

Given a tableau $T$ of shape $\lambda$ with consecutive entries
$i+1,\ldots,n+i$ we put $St(T)\in\bT_\lambda$ to be its
standardization, that is $(St(T))_{q,r}=(T)_{q,r}-i.$

\subsection{The procedure $T\mapsto C(T)$}

\label{3.1}

We start with a sort of cyclic procedure using jeu de taquin. Let
$\lambda=(\lambda_1,\ldots,\lambda_k)$ be such that
$\lambda_1=\ldots=\lambda_j$ and ($\lambda_{j+1}<\lambda_1$ or
$j=k$). Let $T\in \bT_\lambda$ be such that
$\sh(\pi_{2,n}(T))=(\lambda_1,\ldots,\lambda_j-1,\ldots)$. Let
$S\in\bT_\lambda$ be obtained from $St(\pi_{2,n}(T))$ by adding
a box of entry $n$ at the end of the $j$-th row. Set $C(T)=S$ and respectively
$C^{-1}(S)=T$.

For example, take
$$T=\begin{array}{lll}
1&2&4\\
3&6&8\\
5&7&10\\
9&11&\\
\end{array}.$$
Then
$$ \pi_{2,11}(T)=\begin{array}{lll}
2&4&8\\
3&6&10\\
5&7&\\
9&11&\\
\end{array},\quad St(\pi_{2,11}(T))=\begin{array}{lll}
1&3&7\\
2&5&9\\
4&6&\\
8&10&\\
\end{array},\quad C(T)=\begin{array}{lll}
1&3&7\\
2&5&9\\
4&6&11\\
8&10&\\
\end{array}.$$

As a straightforward corollary of Theorem \ref{theorem-2.3}
we get
\begin{lemma}
\label{lemma-3.1}
The components ${\mathcal K}^T$ and ${\mathcal K}^{C(T)}$ are
equinonsingular.
\end{lemma}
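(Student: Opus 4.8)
The plan is to exhibit the component $\K^{C(T)}$ as arising from $\K^T$ through a chain of the operations already shown to preserve equinonsingularity, namely $T\mapsto\pi_{1,n-1}(T)$ (or equivalently $T\mapsto\pi_{2,n}(T)$, via the Sch\"utzenberger symmetry of \S\ref{2.2}) when the relevant entry sits in the last column, and the reverse "add a last-column box'' operation. First I would unwind the definition: $C(T)=S$ is obtained from $St(\pi_{2,n}(T))$ by appending a box with entry $n$ at the end of the $j$-th row, where rows $1,\dots,j$ all have maximal length $\lambda_1$ and row $j+1$ is strictly shorter (or $j=k$). The key observation is that, because $\sh(\pi_{2,n}(T))=(\lambda_1,\dots,\lambda_j-1,\lambda_{j+1},\dots)$, this new box is a \emph{corner} box of $\sh(S)=\lambda$, and — this is the crucial point — it lies in the \emph{last column} of $S$: indeed row $j$ has length $\lambda_1=\lambda_j$, which is the maximal row length, so its rightmost box is in column $\lambda_1=m$, the last column of $Y_\lambda$. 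Hence $n$ lies in the last column of $S$.

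Next I would apply Theorem \ref{theorem-2.3} to $S$. Since $n$ occupies the last column of $S$, the theorem gives that $\K^S$ and $\K^{\pi_{1,n-1}(S)}$ are equinonsingular. But by construction $\pi_{1,n-1}(S)$ is exactly the tableau obtained from $S$ by deleting the box containing $n$, which is $St(\pi_{2,n}(T))$; after standardization this is $\pi_{2,n}(T)$ (identified, as in \S\ref{2.2} and \S\ref{3.0}, with the standard tableau on $1,\dots,n-1$ via the shift $j\mapsto j-1$). Therefore $\K^S$ is equinonsingular with $\K^{\pi_{2,n}(T)}$. Finally, by the translated form of Theorem \ref{theorem-2.3} recorded at the end of \S\ref{2.2} — the isomorphism $\K^T\cong\K^{Sch(T)}$ together with $Sch(\pi_{2,n}(T))=\pi_{1,n-1}(Sch(T))$ — singularity of $\K^T$ is equivalent to singularity of $\K^{\pi_{2,n}(T)}$. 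Chaining these two equivalences yields that $\K^T$ and $\K^{C(T)}=\K^S$ are equinonsingular, which is the assertion.

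The only genuine point requiring care — and the step I would expect to be the main obstacle, though it is more bookkeeping than difficulty — is verifying the two combinatorial facts that make the theorem applicable: that the appended box is a corner of $Y_\lambda$ sitting in column $m=\lambda_1$ (so that "lies in the last column'' holds for $S$), and that deleting it returns precisely $St(\pi_{2,n}(T))$ so that $\pi_{1,n-1}(S)$ and $\pi_{2,n}(T)$ agree after standardization. Both follow directly from the hypothesis $\lambda_1=\dots=\lambda_j>\lambda_{j+1}$ and from the defining property $\sh(\pi_{2,n}(T))=(\lambda_1,\dots,\lambda_j-1,\dots)$, which guarantees the $j$-th row of $\pi_{2,n}(T)$ has length $\lambda_1-1=m-1$, so that adding the entry $n$ at its end lands in column $m$ and produces a valid standard tableau of shape $\lambda$. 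Once these are checked, the rest is a formal concatenation of results already proved in \S\ref{2.2}–\S\ref{2.3}.
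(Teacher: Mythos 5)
Your proof is correct and follows essentially the same route as the paper's: chaining $\K^T \leftrightarrow \K^{\pi_{2,n}(T)} \leftrightarrow \K^{C(T)}$ via Theorem \ref{theorem-2.3} and the Sch\"utzenberger duality of \S\ref{2.2}. The only point left implicit is that the ``if and only if'' direction of the translated theorem linking $\K^T$ and $\K^{\pi_{2,n}(T)}$ requires $n$ to lie in the last column of $Sch(T)$ --- but this is exactly the corner-box-in-column-$\lambda_1$ fact you already verified for $S$, since the box removed from $Y_\lambda$ to obtain $\sh(\pi_{2,n}(T))$ is the same last box of row $j$.
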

\begin{proof}
By section \ref{2.2} and Theorem \ref{theorem-2.3},
$\K^{\pi_{2,n}(T)}$ is equinonsingular to $\K^T$ and $\K^{C(T)}$
is equinonsingular to $\K^{\pi_{2,n}(T)}$ so that $\K^T$ and
$\K^{C(T)}$ are equinonsingular.  
\end{proof}
\begin{remark}
According to section \ref{2.2} and the proof of Theorem
\ref{theorem-2.3}, the components $\K^T$ and $\K^{C(T)}$ are both
fiber bundles over the same space with the same fiber isomorphic
to $\K^{\pi_{2,n}(T)}$, so that $\K^T$ and $\K^{C(T)}$ are indeed
equisingular and also have the same Poincar\'e polynomial. We
prefer the notion of ``equinonsingularity'' to ``equisingularity''
since all nonsingular components are nonsingular in the same way
so that we can speak about equinonsingularity of $\K^T$ and
$\K^{\pi_{i,j}(T)}.$
\end{remark}

\subsection{Sch\"utzenberger involution}

\label{3.2}

Let us recall in short Sch\"utzenberger procedure $T\mapsto
Sch(T)$ (cf. \cite{Ful} and \ref{2.2}). Given $T$ let
$D_i=\sh(\pi_{n+1-i,n}(T))$. In such a way we get a correspondence between $T$ and the 
chain of Young diagrams $(D_1,\ldots,D_n)$.  Then $Sch(T)$ is the
tableau obtained from the chain $(D_1,\ldots,D_n)$ by putting $i$
in the only new box of $D_i$ compared to $D_{i-1}$.

For example,
$$T=\begin{array}{lll}
1&2&3\\
4&5&\\
6&&\\
\end{array}\!\!\rightarrow((1),\, (1,1),\, (2,1),\, (2,1,1),\, (2,2,1),\, (3,2,1)),\ Sch(T)=
\begin{array}{lll}
1&3&6\\
2&5&\\
4&&\\
\end{array}\!\!\!.$$

By section \ref{2.2} we have $\K^T\cong \K^{Sch(T)}$. In
particular $\K^T$ and $\K^{Sch(T)}$ are equinonsingular.

\begin{remark}
\label{remark-3.3} Note that the procedure defined in
\ref{3.1} is ``a first step'' of Sch\"utzenberger procedure. We
can in general define $T\rar C(T)$ by adding $n$ to the
standardization of $\pi_{2,n}(T)$ in the empty box obtained by jeu
de taquin, even if this box does not lie in the last column.
However this procedure does not automatically preserve the
singularity. As an example of non preserving singularity by the
general procedure $S\rar T$ let us consider $\K^S$ from section
\ref{2.5}. It is the only singular component in type $(2,2,1,1).$
One has
$$ S=\begin{array}{ll}
1&3\\
2&5\\
4&\\
6&\\
\end{array},\quad \pi_{2,6}(S)=\begin{array}{ll}
2&3\\
4&5\\
6&\\
\end{array},\quad {\rm so\ that}\quad S\rar T=\begin{array}{ll}
1&2\\
3&4\\
5&\\
6&\\
\end{array}$$
however $\K^T$ is nonsingular.
\end{remark}

\subsection{Partitioning a tableau and partial procedures}

\label{3.4}

Let us further note that in special cases we can partition a
tableau $T$ into subtableaux and apply our procedure to the
subtableaux.  Let $\lambda^*=(\lambda_1^*,\ldots,\lambda_m^*)$.
Recall that $\varsigma_i=\lambda_1^*+\ldots+\lambda_i^*$. Consider
$T\in\bT_\lambda$. Note that $(T)_{1,i+1}\leq  \varsigma_i+1$
always and moreover $(T)_{1,i+1}= \varsigma_i+1$ if and only if
$(T_1,\ldots,T_i)\in\bT_{\lambda_{[1,i]}}.$ Thus, if there exists
$i: 1\leq i<m$ such that $(T)_{1,i+1}=\varsigma_i+1$  we can
partition $T$ into $T_{[1,i]}$ and $T_{[i+1,m]}$ where
$St(T_{[i+1,m]})\in \bT_{\lambda_{[i+1,m]}}$   is well-defined.

\begin{proposition}
\label{proposition-3.4}
Let $\lambda^*=(\lambda_1^*,\ldots,\lambda_m^*)$ and let
$T\in\bT_\lambda$ be such that there exists $i\ :\ 1\leq i<m$ such
that $(T)_{1,i+1}=\varsigma_i+1.$ Then
$$\K^T\cong \K^{T_{[1,i]}}\times \K^{St(T_{[i+1,m]})}.$$
\end{proposition}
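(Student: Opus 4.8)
The plan is to exhibit the isomorphism $\K^T\cong\K^{T_{[1,i]}}\times\K^{St(T_{[i+1,m]})}$ by first understanding the combinatorial hypothesis $(T)_{1,i+1}=\varsigma_i+1$ geometrically: it means that the chain of Young diagrams $\sh(\pi_{1,1}(T)),\ldots,\sh(\pi_{1,n}(T))$ first fills the diagram $Y_{\lambda_{[1,i]}}$ (using exactly the entries $1,\ldots,\varsigma_i$), and only afterwards starts adding boxes to the columns $i+1,\ldots,m$. Consequently, for any flag $(V_0,\ldots,V_n)\in\F_T$, the subspace $V_{\varsigma_i}$ satisfies $Y(u_{|V_{\varsigma_i}})=Y_{\lambda_{[1,i]}}$. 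So the first step is to fix such a flag, set $V':=V_{\varsigma_i}$, let $u':=u_{|V'}$ and $u'':=$ the induced nilpotent on $V/V'$ (whose Jordan type is $\lambda_{[i+1,m]}$, since columns $i+1,\ldots,m$ of $Y_\lambda$ form $Y_{\lambda_{[i+1,m]}}$ after conjugation), and observe that restricting a flag in $\F_T$ to $V'$ and projecting to $V/V'$ gives flags in $\F_{T_{[1,i]}}$ and $\F_{St(T_{[i+1,m]})}$ respectively.

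The second step is to realize this as a genuine morphism of varieties at the level of the \emph{components}, not just the cells $\F$. The natural approach is the same fiber-bundle technique used in the proof of Theorem \ref{theorem-2.3}: the set $\U:=\{(V_0,\ldots,V_n)\in\K^T: Y(u_{|V_{\varsigma_i}})=Y_{\lambda_{[1,i]}}\}$ is open in $\K^T$ (it contains $\F_T$ and $\K^T=\ov\F_T$), and in fact — because the condition $Y(u_{|V_{\varsigma_i}})=Y_{\lambda_{[1,i]}}$ forces $V_{\varsigma_i}\supset\ker u^{j}$ for the relevant $j$ and one checks this is a \emph{closed} condition here, being maximal — one should be able to argue $\U=\K^T$ outright, exactly as in the "last column" case of Theorem \ref{theorem-2.3}. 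Then I would consider the map $\Phi\colon\K^T\to X:=\{W\subset V \text{ a } u\text{-stable subspace of dimension }\varsigma_i \text{ with } Y(u_{|W})=Y_{\lambda_{[1,i]}}\}$, $(V_0,\ldots,V_n)\mapsto V_{\varsigma_i}$. Since $X$ is a single orbit under $Z(u)$ (any two such subspaces are conjugate by the centralizer; this is standard), $\Phi$ is a $Z(u)$-equivariant morphism onto a homogeneous space, hence an algebraic fiber bundle, and its fiber over a chosen $W$ is $\{(V_0,\ldots,V_n)\in\K^T: V_{\varsigma_i}=W\}$, which by the first step is precisely (the closure inside $\B_u$ of) $\F_{T_{[1,i]}}\times\F_{St(T_{[i+1,m]})}$ sitting in $\B_{u_{|W}}\times\B_{u_{V/W}}$.

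The third step is to pin down that fiber as $\K^{T_{[1,i]}}\times\K^{St(T_{[i+1,m]})}$. The inclusion $\B_{u_{|W}}\times\B_{u_{V/W}}\hookrightarrow\B_u$ (splice a flag of $W$ with the preimage of a flag of $V/W$) is a closed immersion, it maps $\F_{T_{[1,i]}}\times\F_{St(T_{[i+1,m]})}$ into $\F_T$, hence maps $\K^{T_{[1,i]}}\times\K^{St(T_{[i+1,m]})}$ into the (irreducible, closed) fiber $\Phi^{-1}(W)$; then I would close the argument by a dimension count using formula $(1)$, which additively decomposes $\dim\B_u=\sum_{j=1}^{m}\binom{\lambda_j^*}{2}$ into the contributions of columns $1,\ldots,i$ and columns $i+1,\ldots,m$, plus the bundle relation $\dim\K^T=\dim X+\dim\Phi^{-1}(W)$ together with $\dim X=\dim\B_u-\sum_{j\le i}\binom{\lambda_j^*}{2}-\sum_{j>i}\binom{\lambda_j^*}{2}+\dim\K^{T_{[1,i]}}+\dim\K^{St(T_{[i+1,m]})}$, forcing equality of the irreducible varieties.

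The main obstacle I anticipate is the middle step: checking carefully that $X$ really is a single $Z(u)$-orbit (equivalently that $\F_T$ surjects onto $X$ under $\Phi$ and that the stabilizer picture makes $\Phi$ locally trivial) and that the fiber is globally the honest product $\B_{u_{|W}}\times\B_{u_{V/W}}$ rather than merely birational to it — i.e.\ that every $u$-stable flag through $W$ with the prescribed diagram chain does split as a flag of $W$ followed by the pullback of a flag of $V/W$, with the Jordan types matching columnwise. Once that structural claim is in hand, the identification of the fiber with the product of components is forced by irreducibility and the dimension count, exactly paralleling the end of the proof of Theorem \ref{theorem-2.3}.
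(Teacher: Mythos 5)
Your plan is, in substance, the same as the paper's proof, but it carries a fiber-bundle superstructure that disappears once you make one observation you seem to have within reach but do not quite state: the set
$X=\{W\subset V : W \text{ $u$-stable},\ \dim W=\varsigma_i,\ Y(u_{|W})=Y_{\lambda_{[1,i]}}\}$
is a \emph{single point}, namely $W=\ker u^i$. Indeed, $Y_{\lambda_{[1,i]}}$ has exactly $i$ columns, so any such $W$ satisfies $u^i|_W=0$, i.e.\ $W\subseteq\ker u^i$, and $\dim\ker u^i=\lambda_1^*+\cdots+\lambda_i^*=\varsigma_i=\dim W$ forces equality. This also settles your ``$\U=\K^T$ outright'' claim at once: every flag in $\F_T$ has $V_{\varsigma_i}=\ker u^i$, this is a closed condition, and $\K^T=\ov{\F_T}$. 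With $X$ a point, there is no fibration to trivialize and none of your anticipated obstacles arise; your worry about flags through $W$ splitting is likewise immediate, since for $j\leq\varsigma_i$ one has $V_j\subset W$ and for $j\geq\varsigma_i$ one has $V_j\supset W$, so every such flag is the splice of a flag of $W$ with the preimage of a flag of $V/W$.

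The paper's proof is exactly the streamlined version: it fixes $W_1=\ker u^i$, $W_2=V/W_1$, exhibits the closed immersion $\Phi:\B_{u_1}\times\B_{u_2}\to\B_u$ with image $\{(V_0,\ldots,V_n)\in\B_u:V_{\varsigma_i}=\ker u^i\}$, notes $\F_{T_{[1,i]}}\times\F_{St(T_{[i+1,m]})}\subset\Phi^{-1}(\F_T)$, uses $\ov{A\times B}=\ov{A}\times\ov{B}$ to get $\Phi(\K^{T_{[1,i]}}\times\K^{St(T_{[i+1,m]})})\subset\K^T$, and then concludes by the additivity of formula $(1)$ over columns together with irreducibility of $\K^T$. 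Your ``third step'' reproduces this verbatim; I would just drop steps one and two, record the singleton observation, and state the Zariski-closure-of-products fact explicitly rather than leaving it implicit.
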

\begin{proof}
To prove the proposition we recall the following very simple fact:

Let $X,Y$ be algebraic varieties. Let $A\subset X$ and $B\subset
Y$. Then for the Zariski closures:
$$\overline{A\times B}=\overline{A}\times \overline{B}.\eqno{(2)}$$

Now, let
$W_1=\mathrm{ker}\,u^i$ and $W_2=V/W_1$. Let
$u_1\in\mathrm{End}(W_1)$ and $u_2\in\mathrm{End}(W_2)$ be the
nilpotent endomorphisms induced by $u$. Let ${\mathcal B}_{u_1}$
and ${\mathcal B}_{u_2}$ be the corresponding Springer fibers. The
map
\begin{eqnarray}
\Phi: {\mathcal B}_{u_1}\times {\mathcal B}_{u_2} & \rightarrow & {\mathcal B}_u \nonumber \\
(V_0,\ldots,V_{\varsigma_i}),(V'_0,\ldots,V'_{n-{\varsigma_i}}) & \mapsto &
(V_0,\ldots,V_{\varsigma_i},V'_1+W_1,\ldots,V'_{n-\varsigma_i}+W_1)
\nonumber
\end{eqnarray}
is well-defined and is a closed immersion, of image
$\{(V_0,\ldots,V_n)\in{\mathcal B}_u:V_{\varsigma_i}=\ker\,u^i\}$.
We have $\F_{T_{[1,i]}}\times \F_{St(T_{[i+1,m]})}\subset
\Phi^{-1}(\F_T)\subset \Phi^{-1}({ \K}^T)$. By $(2)$ we get ${
\K}^{T_{[1,i]}}\times { \K}^{St(T_{[i+1,m]})}\subset \Phi^{-1}({
\K}^T)$. Thus, $\Phi({ \K}^{T_{[1,i]}}\times {
\K}^{St(T_{[i+1,m]})})\subset { \K}^T$.

Since $\dim\,{ \K}^T=\dim\,{\K}^{T_{[1,i]}}+\dim\,{
\K}^{St(T_{[i+1,m]})}=\dim\,\Phi({ \K}^{T_{[1,i]}}\times
{\K}^{St(T_{[i+1,m]})})$, we obtain the equality $\Phi({
\K}^{T_{[1,i]}}\times { \K}^{St(T_{[i+1,m]})})= { \K}^T$. Hence ${
\K}^{T_{[1,i]}}\times { \K}^{St(T_{[i+1,m]})}\cong { \K}^T$.  
\end{proof}

\label{3.5}

Let $T\in\bT_\lambda$ be such  that there exist $i_0=0<i_1<\ldots<
i_s=m$ where $s\geq 2$ such that $(T)_{1,i_j+1}=\varsigma_{i_j}+1$
for any $j\ :\ 1\leq j<s$. Then we can partition $T$ into
subtableaux $T_{[i_{j-1}+1,i_j]}$ for $1\leq j\leq s$ and try to
apply either the cyclic or the Sch\"utzenberger  procedure to
these subtableaux.

Indeed if $T_{[i_{j-1}+1,i_j]}$ is  such that
$S:=C(St(T_{[i_{j-1}+1,i_j]}))$ is defined, then let $\widehat S$
be obtained from $S$ by $(\widehat
S)_{q,r}=(S)_{q,r}+\varsigma_{i_{j-1}}.$ Then by Lemma \ref{lemma-3.1}
and Proposition \ref{proposition-3.4}, $\K^T$ is equinonsingular to $\K^P$,
where $P$ is the concatenation
$$P=(T_{[1,i_{j-1}]},\widehat S,T_{[i_j+1,m]}).$$
We put $C_{[i_{j-1}+1,i_j]}(T):=P$ in this case. Note that $C(T)=C_{[1,m]}(T).$

For example, let
$$T=\begin{array}{lll}
1&2&5\\
3&4&6\\
\end{array}.$$
Then $T_{[1,2]}$ satisfies the condition. One has
$$ C(T_{[1,2]})=\begin{array}{ll}
1&3\\
2&4\\
\end{array}
\quad{\rm and}\quad C_{[1,2]}(T)=\begin{array}{lll}
1&3&5\\
2&4&6\\
\end{array}.
$$
Note that in general $C_{[i_{j-1}+1,i_j]}(T)\ne C^k(T)$ for any
$k\in{\mathbb Z}.$ Indeed, in the case above
$$S:=C(T)=\begin{array}{lll}
1&3&4\\
2&5&6\\
\end{array},\quad U:=C(S)=\begin{array}{lll}
1&2&3\\
4&5&6\\
\end{array},\quad T=C(U)=C^3(T)
$$
%$$Q=C(P)=\begin{array}{lll}
%1&2&4\\
%3&5&6\\
%\end{array},\quad P=C(Q)=C^2(Q)$$
so that $C_{[1,2]}(T)$ cannot be obtained from $T$ by applying our
cyclic procedure a few times to any of them.

\smallskip
Exactly in the same way let $S:=Sch(St(T_{[i_{j-1}+1,i_j]}))$ and
let $\widehat S$ be obtained from $S$ by $(\widehat
S)_{q,r}=(S)_{q,r}+\varsigma_{i_{j-1}}.$ Then by \ref{3.2} and
Proposition \ref{proposition-3.4}, $\K^T$ and $\K^P$ are equinonsingular,
where $P=(T_{[1,i_{j-1}]},\widehat S,T_{[i_j+1,m]})$. We put
$Sch_{[i_{j-1}+1,i_j]}(T):=P$ in this case. Again,
$Sch(T)=Sch_{[1,m]}(T).$

\subsection{Definition of $Eqs$-classes of equinonsingular components}

\label{3.6}

Using the procedures above let us define classes of
equinonsingular components in $\B_u$ and respectively classes of
equinonsingular tableaux in $\bT_{J(u)}$.

Given $T\in\bT_{\lambda}$ set $Eqs(T)$ to be the set of all
$S\in\bT_\lambda$ for which there exists a chain
$T=T^1,\ldots,T^k=S$ where $T^l$ is obtained from $T^{l-1}$ in one
of two  ways
\begin{itemize}
\item[(a)]  $T^l=C_{[i,j]}^k(T^{l-1})$ where $1\leq i<j\leq m$ and $k=\pm 1;$
\item[(b)]  $T^l=Sch_{[i,j]}(T^{l-1})$ where $1\leq i<j\leq m$.
\end{itemize}

\subsection{On the two-row case}

\label{3.7}

As a first application we give a simple proof of the fact shown in
\cite[\S 5]{F}, namely
\begin{theorem}
\label{theorem-3.7} If $J(u)=(r,s)$ then all the components of
$\B_u$ are nonsingular. Moreover, they are iterated bundles of
base type $(\Co{\mathbb P}^1,\ldots, \Co{\mathbb P}^1)$ with $s$
terms.
\end{theorem}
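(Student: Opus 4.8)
The plan is to proceed by induction on $n=r+s$, using Theorem \ref{theorem-2.3} and the combinatorial procedures of Section~3 to reduce every tableau to one that visibly yields an iterated $\Co\mathbb{P}^1$-bundle. First I would dispose of the degenerate cases: if $s=0$ then $\B_u$ is a point, and if $r=s$ the first column is forced to be $(1,2,\ldots,r)^t$, so Proposition \ref{proposition-3.4} splits off a factor and reduces $n$; hence we may assume $r>s\geq 1$. Let $T\in\bT_{(r,s)}$ and look at where $n$ sits. Since $\lambda$ has only two rows, $n$ lies either at the end of the first row or at the end of the second row, i.e.\ in a column of length $1$ or a column of length $2$.

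The key dichotomy is this. If $n$ lies in the last column of $T$ — which happens exactly when $n$ is at the end of the first row, i.e.\ $T$ has a length-one column $r$ — then Theorem \ref{theorem-2.3} gives that $\K^T$ is equinonsingular to $\K^{T'}$ with $T'=\pi_{1,n-1}(T)$ of shape $(r-1,s)$, and the fibre of $\Phi:\mathcal U\to\mathcal H'_u$ is $\K^{T'}$ while the base $\mathcal H'_u$ has dimension $\lambda^*_{k_n}-1 = 1-1 = 0$ — wait, that base is a point, which is not what we want. So instead I use the \emph{Sch\"utzenberger}/cyclic symmetry: by Section \ref{2.2}, $\K^T\cong\K^{Sch(T)}$ and $\K^T$ is equinonsingular to $\K^{\pi_{2,n}(T)}$ via $\Phi':\mathcal U'\to\mathcal H'_u$, whose base is now a $\Co\mathbb{P}^{\lambda^*_{k_1}-1}$, where $k_1$ is the column of the entry $1$ — but $1$ is always in the first column, of length $2$ (since $r>s\geq 1$ forces column $1$ to have length $2$), so this base is $\Co\mathbb{P}^1$. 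Thus $\K^T$ is a $\Co\mathbb{P}^1$-bundle over $\K^{\pi_{2,n}(T)}$, and $\pi_{2,n}(T)$ (standardized) is a tableau of shape obtained from $(r,s)$ by deleting one corner box. Since $(r,s)$ with $r>s$ has its removable corners only at the ends of the rows, the deleted shape is either $(r-1,s)$ or $(r,s-1)$, again a two-row shape with strictly smaller $n$.

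So the induction runs: $\K^T$ is a $\Co\mathbb{P}^1$-bundle over a component of a two-row Springer fibre for $n-1$, which by the induction hypothesis is an iterated $\Co\mathbb{P}^1$-bundle with $s$ or $s-1$ terms. One must check the bookkeeping on the number of terms: peeling off the box containing $1$ removes it from column $1$, which has length $2$ unless $s$ has just dropped to make a length-$1$ column, so the number of length-$2$ columns of the smaller shape is either $s$ or $s-1$ accordingly, and the total count of $\Co\mathbb{P}^1$ factors telescopes correctly to $s$. Being an iterated bundle with smooth fibre $\Co\mathbb{P}^1$ over a smooth base, $\K^T$ is smooth; this is the nonsingularity claim. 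The base case $s=0$ (a point, zero terms) or $n\leq 1$ is trivial.

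The main obstacle is making precise the claim that $\Phi'$ (the $\pi_{2,n}$-version of the map in Theorem \ref{theorem-2.3}, obtained by the duality of Section \ref{2.2}) is genuinely a Zariski-locally-trivial $\Co\mathbb{P}^1$-bundle rather than just having all fibres isomorphic to $\K^{\pi_{2,n}(T)}$ with a $\Co\mathbb{P}^1$ of choices: one needs the base $\mathcal H'_u$ (equivalently the $Q$-orbit in $\mathcal H(W)$) to be exactly $\Co\mathbb{P}^1$, which requires $\zeta(\ker u)$ to have codimension $2$ in $W=V/\operatorname{Im}u$, i.e.\ $\lambda^*_1=2$; this is where $r>s$ is used (if $r=s$ then $\lambda^*_1$ could still be $2$, but we have already split that case off). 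The local triviality then follows verbatim from the Schubert-cell argument in the proof of Theorem \ref{theorem-2.3}, with $U_H$ an affine line. I would also remark that since each stage is a Zariski-locally-trivial projective bundle, the Poincar\'e polynomial of $\K^T$ is $(1+q^2)^s$, though this is not needed for the statement.
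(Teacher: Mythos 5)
Your key step is the claim that for $T\in\bT_{(r,s)}$ with $r>s\geq 1$, $\K^T$ is a $\Co\mathbb{P}^1$-bundle over $\K^{St(\pi_{2,n}(T))}$, and you justify this by asserting that the base $\mathcal H'_u$ of the map of Theorem~\ref{theorem-2.3} (applied, via the duality of \ref{2.2}, to $Sch(T)$) is $\Co\mathbb{P}^{\lambda^*_{k_1}-1}$ with $k_1$ the column of $1$. This is wrong on two counts, and the error is not cosmetic. First, the dimension of $\mathcal H'_u$ is $\lambda^*_{k_n}-1$ where $k_n$ is the column of $n$ in $Sch(T)$, i.e.\ the column of the corner box that jeu de taquin removes from $T$ when sliding out $1$ --- not the column of $1$ itself. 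For $(r,s)$ with $r>s$ the removed corner is either $(1,r)$ (column of length $1$, so $\mathcal H'_u$ is a point and the step is merely an isomorphism $\K^T\cong\K^{\pi_{2,n}(T)}$, adding \emph{no} $\Co\mathbb{P}^1$ factor) or $(2,s)$ (column of length $2$, so $\dim\mathcal H'_u=1$). Second, and more seriously, in the latter case $n$ in $Sch(T)$ is \emph{not} in the last column (since $s<r$), so Theorem~\ref{theorem-2.3} only gives a bundle structure on the \emph{open} subset $\mathcal U\subsetneq\K^{Sch(T)}$, over the open $Q$-orbit $\mathcal H'_u$. Here $Q$ is the stabilizer in $GL(W)$ of the flag $\zeta(\ker u^k)$; since $\zeta(\ker u^s)=\cdots=\zeta(\ker u^{r-1})$ is a proper $Q$-invariant line in the $2$-dimensional $W$ (using $r>s$), $Q$ is a Borel and $\mathcal H'_u\cong\mathbb A^1$, not $\Co\mathbb{P}^1$. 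The Schubert-cell argument in the proof of Theorem~\ref{theorem-2.3} yields local triviality only over this affine cell; extending it over the closure, i.e.\ showing the fibre over the remaining boundary point of $\mathbb P(\ker u)$ is again $\K^{\pi_{2,n}(T)}$, requires a further argument that the paper's Theorem~\ref{theorem-2.3} does not supply (this is essentially the content of Fung's original theorem). Your ``verbatim'' claim therefore does not close the gap.

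The paper avoids this pitfall entirely. It shows instead that every $T\in\bT_{(r,s)}$ lies in $Eqs(P(r,s))$ for the explicit tableau $P(r,s)$ of $(3)$, which splits by Proposition~\ref{proposition-3.4} into a product $(\Co\mathbb{P}^1)^s\times(\mathrm{pt})^{r-s}$. The two cases of the removed corner are treated separately: when $\sh(\pi_{2,n}(T))=(r-1,s)$, $C(T)$ is defined and one induces on $n$; when $\sh(\pi_{2,n}(T))=(r,s-1)$, one shows there is $i$ with $(T)_{2,i}=2i$ and then splits $T$ as a concatenation via Proposition~\ref{proposition-3.4}, so the induction proceeds on two smaller pieces without ever needing the bundle to extend over a boundary point. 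Finally, your handling of the $r=s$ reduction is also off: in shape $(r,r)$ the first column has length $2$, not $r$, so it is not ``forced to be $(1,2,\ldots,r)^t$'' and Proposition~\ref{proposition-3.4} does not apply immediately; the paper first applies powers of $C$ to force $(T)_{2,1}=2$ before splitting.
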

\begin{proof}
Set
$$P(r,s):=\begin{array}{lllcll}1&3&\cdots&2s-1&\cdots&r+s\\ 2&4&\cdots&2s&&\\ \end{array}.\eqno{(3)}$$
By Proposition \ref{proposition-3.4}
$$\begin{array}{rcccc}
\K^{P(r,s)}&\cong& \underbrace{\B_2\times\B_2\times\ldots\times\B_2}&\times & \underbrace{\B_1\ldots\times\B_1}.\\
&& \mbox{\footnotesize${ s\ \ {\rm times}}$}& &\mbox{\footnotesize
$r-s\ \ {\rm times}$}
\\ \end{array}$$
Since all the components in a same $Eqs$-class are either
isomorphic or at least iterated bundles of the same base type
(see the remark in \ref{3.1}), it remains to show that
for $T\in\bT_{(r,s)}$ one has $P(r,s)\in Eqs(T).$ We show this by
induction on $r+s$. For $r+s\leq 2$ this is trivially true. Assume
this is true for $r+s\leq n-1$ and show for $r+s=n$.

First consider the case $r=s$. If $(T)_{2,1}=2$ then $T$ is the
concatenation $T=({}^1_2,\pi_{3,n}(T))$ with
$St(\pi_{3,n}(T))\in\bT_{(r-1,r-1)}$ so that all the tableaux with
$(T)_{2,1}=2$ are in the same class by induction hypothesis. Thus,
it is enough to show that for any $T\in\bT_{(r,r)}$ there exists
$j$ such that $(C^j(T))_{2,1}=2.$ Actually, if $(T)_{2,1}=j+1$
where $j\geq 2$ then $(C(T))_{2,1}=j$, so that
$(C^{j-1}(T))_{2,1}=2$, which completes the proof in the case
$(r,r)$.

Now assume $s<r$. If $\sh (\pi_{2,n}(T))=(r-1,s)$ then
$C(T)=(St(\pi_{2,n}(T)),(n))$ and $C(T)_{[1,r-1]}\in\bT_{(r-1,s)}$
so that by induction hypothesis $P(r-1,s)\in Eqs(C(T)_{[1,r-1]})$.
Thus, $P(r,s)=(P(r-1,s),(n))\in Eqs(C(T))=Eqs(T)$.

If $\sh(\pi_{2,n}(T))=(r,s-1)$ then there exists $i\ :\ 1\leq
i\leq s$ such that $(T)_{2,i}<(T)_{1,i+1}$, hence one has
$\sh(\pi_{1,(T)_{2,i}}(T))=(i,i)$ so that $(T)_{2,i}=2i$. Then
$T=(T_{[1,i]},T_{[i+1,r]})$ with $T_{[1,i]}\in \bT_{(i,i)}$ and
$St(T_{[i+1,r]})\in \bT_{(r-i,s-i)}$. By induction hypothesis
$P(i,i)\in Eqs(T_{[1,i]})$ and $P(r-i,s-i)\in
Eqs(St(T_{[i+1,r]}))$. Thus, again $P(r,s)\in Eqs(T)$.  
\end{proof}

\section{On the components of $(r,s,1)$ type}

\subsection{Outline}

\label{4.0}

Let us now consider the case $(r,s,1)$.
Again applying Theorem \ref{theorem-2.3}, it is enough to show that $\K^T$
is nonsingular for $T\in\bT_{(r,r,1)}.$

Set
$$Q(k,k,1)=\begin{array}{cccc}
1&3&\ldots&k+1\\
2&k+3&\ldots&2k+1\\
k+2&&\\
\end{array}.$$
Let $P(s,s)$ be  defined as in formula $(3)$ . Put $P(0,0):=\emptyset$, and let $P(s,s|t)$ be the
tableau $P(s,s)$ shifted by $t$, that is
$(P(s,s|t))_{i,j}=(P(s,s))_{i,j}+t$ for $1\leq i\leq 2$ and $1\leq
j\leq s.$ Recall that $(Q(k,k,1),P(r-k,r-k|2k+1))\in\bT_{(r,r,1)}$
is the tableau obtained by concatenating $Q(k,k,1)$ and
$P(r-k,r-k|2k+1)$. Using Proposition \ref{proposition-3.4}, we get
$$
\begin{array}{rccc}
\K^{(Q(k,k,1),P(r-k,r-k|2k+1))}&\cong&\K^{Q(k,k,1)}\times& \underbrace{\B_2\times\ldots\times\B_2}.\\
&&& {\mbox{\footnotesize $r-k\ \ {\rm times}$}} \\
\end{array}
$$

Our strategy is to show first that $\bT_{(r,r,1)}$ is partitioned
into $r$ classes of equinonsingular components
$\{Eqs((Q(k,k,1),P(r-k,r-k|2k+1))\}_{k=1}^r$ and then to show that
the component ${\mathcal K}^{Q(k,k,1)}$ is nonsingular.

\subsection{Partition into $Eqs$-classes in the $(r,r,1)$ case}

\label{4.1}

We start with some combinatorial notes. Consider
$T\in\bT_\lambda$ for  a partition $\lambda$ of $n$.  For $1\leq
i\leq n$ put $r_T(i)$ to be the number of the row $i$ belongs to,
that is $r_T(i)=k$ if there exists $l$ such that $(T)_{k,l}=i.$
Since the entries increase in the rows from left to right, the
positions $\{r_T(i)\}_{i=1}^n$ define $T$ completely.

Put $\tau(T):=\{i\ :\ r_T(i+1)>r_T(i)\}.$ Note that if $i\in
\tau(T)$ then for any $k,l$ such that $k\leq i$ and $l\geq i+1$
one has $i\in\tau(\pi_{k,l}(T)).$  In fact
$\tau(\pi_{k,l}(T))=\tau(T)\cap\{k,k+1,\ldots,l-1\}$.

One also has  $i\in\tau(T)$ if and only if
$\pi_{i,i+1}(T)={}_{i+1}^{\ i}.$ Since
$$\sh(\pi_{i,j}(T))=\sh(\pi_{n+1-j,n+1-i}(Sch(T)))$$ we get that
$i\in\tau(T)$ if and only if $n-i\in\tau(Sch(T)).$

Let $T\in\bT_\lambda$ where $\lambda=(r,s,1)$. Then $(T)_{2,1}-1,
(T)_{3,1}-1\in\tau(T).$ Put
$$j(T):=\max\{i\in \tau(T)\ :\
i<(T)_{3,1}-1\}.$$ Since $j(T)\geq (T)_{2,1}-1$, it is well-defined. Put
$${\rm dist}(T):=(T)_{3,1}-1-j(T).$$

We need the following technical
\begin{lemma}
\label{lemma-4.1} Let $T\in\bT_{(r,s,1)}$. Then
\begin{itemize}
\item[\rm (i)] One has $(Sch(T))_{3,1}=n-j(T)+1$ so that ${\rm
dist}(Sch(T))={\rm dist}(T);$
\item[\rm (ii)] If $C(T)$ is
defined then $(C(T))_{3,1}=(T)_{3,1}-1$ and $j(C(T))=j(T)-1$ (for $r>1$),
so that ${\rm dist}(C(T))={\rm dist(T)};$
\item[\rm (iii)] For any $S\in Eqs(T)$ one has ${\rm
dist}(S)={\rm dist}(T).$
\end{itemize}
\end{lemma}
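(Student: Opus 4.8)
# Proof proposal for Lemma 4.1

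The plan is to establish (i), (ii), (iii) in order, where (iii) will follow formally from (i), (ii) together with Proposition \ref{proposition-3.4}.

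\textbf{Part (i).} First I would unwind the definitions. By the description of $Sch$ in terms of the chain $(D_1,\ldots,D_n)$ with $D_i=\sh(\pi_{n+1-i,n}(T))$, the row in which the entry $n-i+1$ sits in $Sch(T)$ is the row of the box by which $D_{n-i+1}$ exceeds $D_{n-i}$, i.e. the row of the box removed from $\sh(\pi_{i,n}(T))$ when passing to $\sh(\pi_{i+1,n}(T))$. In particular $(Sch(T))_{3,1}=n-i+1$ where $i$ is the largest index such that removing $i$ by jeu de taquin from $\pi_{i,n}(T)$ deletes a box from the third row. For $\lambda=(r,s,1)$ the third row of $T$ contains a single box with entry $(T)_{3,1}$; the shape $\sh(\pi_{i,n}(T))$ has a third row exactly when $i\le (T)_{3,1}$, and loses its third row precisely at the step $i=j(T)+1$, because $j(T)$ is by definition the largest element of $\tau(T)$ below $(T)_{3,1}-1$ — equivalently, the largest $i<(T)_{3,1}$ with $r_T(i)<r_T(i+1)$, which governs exactly when the jeu de taquin slide from the third row stalls. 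I would spell out this last equivalence carefully using the remark (already in the excerpt) that $i\in\tau(T)\iff n-i\in\tau(Sch(T))$ together with $\sh(\pi_{i,j}(T))=\sh(\pi_{n+1-j,n+1-i}(Sch(T)))$; in fact this identity directly gives $j(Sch(T))=n-(T)_{3,1}+1$-style bookkeeping, and combined with $(Sch(T))_{3,1}=n-j(T)+1$ yields ${\rm dist}(Sch(T))=(Sch(T))_{3,1}-1-j(Sch(T))=(n-j(T))-( n-(T)_{3,1})={\rm dist}(T)$.

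\textbf{Part (ii).} Here I would use that $C(T)$ is obtained from $St(\pi_{2,n}(T))$ by appending a box with entry $n$ at the end of a full row. Since the third row of $T$ has length $1$, the box with entry $(T)_{3,1}$ is a corner box; deleting the entry $1$ by jeu de taquin (to form $\pi_{2,n}(T)$) shifts entries down the first column until the slide stalls, and then standardizing subtracts $1$ from everything. The key point is that the single third-row box is not the corner through which the vacancy exits (since $r>1$ forces the first row to be strictly longer, the taquin path from position $(1,1)$ moves right, not down, at the relevant step governed by $j(T)$), so after standardization the entry in position $(3,1)$ becomes $(T)_{3,1}-1$, giving $(C(T))_{3,1}=(T)_{3,1}-1$. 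For $j(C(T))$: by the observation in \ref{4.1}, $\tau(\pi_{2,n}(T))=\tau(T)\cap\{2,\ldots,n-1\}$, and standardization shifts $\tau$ by $-1$; since $j(T)\ge (T)_{2,1}-1\ge 1$ and in fact $j(T)\ge 2$ when $r>1$ (one must check $(T)_{2,1}\ge 3$, or handle $(T)_{2,1}=2$ separately — in that degenerate subcase $T$ splits off its first column and the claim is immediate), we get $j(C(T))=j(T)-1$. Hence ${\rm dist}(C(T))=(C(T))_{3,1}-1-j(C(T))=(T)_{3,1}-1-j(T)={\rm dist}(T)$.

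\textbf{Part (iii).} By definition $S\in Eqs(T)$ is reached from $T$ by a chain of steps of types $C_{[i,j]}^{\pm1}$ and $Sch_{[i,j]}$. For the full-tableau operations $C^{\pm1}$ and $Sch$, invariance of ${\rm dist}$ is exactly (i) and (ii) (for $C^{-1}$, apply (ii) to $C^{-1}(T)$). For the partial operations $C_{[i,j]}$ and $Sch_{[i,j]}$ I would argue that they only modify columns $i,\ldots,j$; if these columns do not include column $1$, then $(T)_{3,1}$, and the portion of $\tau(T)$ below $(T)_{3,1}-1$, are untouched, so ${\rm dist}$ is unchanged; if they do include column $1$, then $i=1$ and $T$ splits as $T=(T_{[1,j]},T_{[j+1,m]})$ with $(T)_{1,j+1}=\varsigma_j+1$, the third row lies entirely in $T_{[1,j]}$, and applying (i)/(ii) to the standardized block $T_{[1,j]}$ (whose shape is again of the form $(\cdot,\cdot,1)$) preserves its ${\rm dist}$, which equals ${\rm dist}(T)$ since standardization by a constant shifts both $(T)_{3,1}$ and $j(T)$ equally. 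Chaining these invariances gives ${\rm dist}(S)={\rm dist}(T)$.

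\textbf{Main obstacle.} The genuinely delicate point is the jeu de taquin bookkeeping in (ii): tracking precisely where the vacancy created by deleting $1$ travels and verifying that it exits through a first- or second-row corner rather than disturbing position $(3,1)$, and simultaneously confirming $j(T)\ge 2$ (resp. handling the split-off first-column case). Everything else is formal manipulation of the $\tau$-sets and the two symmetry identities relating $\sh\circ\pi$ and $Sch$.
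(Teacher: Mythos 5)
Your overall architecture matches the paper's (establish (i) and (ii), then deduce (iii) by tracking how each $Eqs$-move affects the first column and the relevant part of $\tau$), and your treatment of (iii) — including the partial operations $C_{[i,j]}$, $Sch_{[i,j]}$, where you correctly observe that a block not containing column $1$ cannot touch $(T)_{3,1}$ or $\tau(T)\cap\{1,\ldots,(T)_{3,1}-1\}$ — is sound and in fact more explicit than the paper's one-line dismissal. However, there are two genuine gaps.

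The main one is in (i). The entire content of (i) is the claim that the third row of $\sh(\pi_{i,n}(T))$ survives exactly for $i\leq j(T)$, equivalently $(Sch(T))_{3,1}=n-j(T)+1$; you assert this ("loses its third row precisely at the step $i=j(T)+1$") but do not prove it, and the tools you propose for "spelling out the equivalence" cannot do the job: the identities $i\in\tau(T)\iff n-i\in\tau(Sch(T))$ and $\sh(\pi_{i,j}(T))=\sh(\pi_{n+1-j,n+1-i}(Sch(T)))$ only translate statements back and forth between $T$ and $Sch(T)$ — they tell you that $n-j(T)$ and $n-(T)_{3,1}+1$ are consecutive elements of $\tau(Sch(T))$, but not that the former equals $(Sch(T))_{3,1}-1$. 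The paper proves this by induction on $n$, splitting on whether $(T)_{3,1}=n$ (where one shows $\tau(\pi_{j(T),n}(T))=\{j(T),n-1\}$, which forces $\sh(\pi_{j(T),n}(T))=(r',1,1)$ and hence that the removal of $j(T)$ is the step that kills the third row) or $(T)_{3,1}<n$ (where one applies the hypothesis to $\pi_{1,n-1}(T)$ and checks the slide $Sch(T)\mapsto\pi_{2,n}(Sch(T))$ misses the third row). Some such argument is required and is missing. Your intermediate assertion that "$\sh(\pi_{i,n}(T))$ has a third row exactly when $i\le (T)_{3,1}$" is moreover false (take $T$ with columns $(1,2,4)$ and $(3,5)$: the third row is already gone in $\pi_{2,5}(T)$ although $(T)_{3,1}=4$), and it contradicts your own next clause. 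Note also that you have misplaced the difficulty: (ii) is the easy half, since the definition of $C$ already stipulates that the box removed from $\sh(\pi_{2,n}(T))$ lies in row $1$ or $2$, so the taquin path cannot pass through $(3,1)$ and $(\pi_{2,n}(T))_{3,1}=(T)_{3,1}$ for free; (i) is where the work is.

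The second gap is in your handling of $(T)_{2,1}=2$ in (ii): it is not true that $(T)_{2,1}=2$ makes $T$ split off its first column (that requires $(T)_{1,2}=\varsigma_1+1=4$; e.g.\ the tableau with rows $(1,3),(2,4),(5)$ has $(T)_{2,1}=2$ and does not split, yet $C(T)$ is defined). The correct argument is the paper's: if $(T)_{2,1}=2$ and $C(T)$ is defined, the vacancy must avoid $(3,1)$, which forces $(T)_{2,2}<(T)_{3,1}$, hence $(T)_{2,2}-1\in\tau(T)$ and $j(T)\geq (T)_{2,2}-1\geq 3>1$, which is exactly what is needed for $j(C(T))=j(T)-1$. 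Your "moves right, not down" justification is also off when $r=s$ (the path does descend into row $2$), although the conclusion you need — that the path terminates in row $1$ or $2$ — holds by the definition of $C$.
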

\begin{proof}
(i)  Let $T\in\bT_{(r,s,1)}$. We show $(Sch(T))_{3,1}=n-j(T)+1$ by
induction on $n=r+s+1.$ This is trivially true if $n=3$. Assume
this is true for $n-1$ and show for $n$.
\begin{itemize}
\item[(a)] If $(T)_{3,1}=n$ then $j(T)=\max\{i\in\tau(T),i<n-1\}.$
Note that  $(\pi_{j(T),n}(T))_{2,1}=j(T)+1$ and
$(\pi_{j(T),n}(T))_{3,1}=n$. Note also that
$$\tau(\pi_{j(T),n}(T))=\{j(T),n-1\}.\eqno{(4)}$$
One has
 $$\pi_{j(T),n}(T)=\begin{array}{ll}
 j(T)&\cdots\\
 j(T)+1&\cdots\\
 n&\\
 \end{array}.$$
If $\sh(\pi_{j(T),n}(T))=(r',s',1)$ where $s'>1$ then $(
\pi_{j(T),n}(T))_{2,2}$ exists. But then by properties of Young
tableaux  $(\pi_{j(T),n}(T))_{2,2}> j(T)+2$ so that
$(\pi_{j(T),n}(T))_{2,2}-1\in \tau(\pi_{j(T),n}(T))$ in
contradiction to $(4)$. Therefore one has
$\sh(\pi_{j(T),n}(T))=(r',1,1)$ and $\sh(\pi_{j(T)+1,n}(T))=(r',1)$ so that, by Sch\"utzenberger
procedure, $(Sch(T))_{3,1}=n-j(T)+1.$
\item[(b)]
Assume that $(T)_{3,1}<n.$ Then by induction hypothesis
$(Sch(\pi_{1,n-1}(T)))_{3,1}=n-j(T).$ Note that up to
standardization $Sch(\pi_{1,n-1}(T))=\pi_{2,n}(Sch(T))$. In
particular we get $\mathrm{sh}\,(\pi_{2,n}(Sch(T)))=(r',s',1)$ so that jeu de
taquin from $Sch(T)$ to $\pi_{2,n}(Sch(T))$ does not touch the
third row, that is $(Sch(T))_{3,1}=(\pi_{2,n}(Sch(T)))_{3,1}$.
Hence $(Sch(T))_{3,1}=(Sch(\pi_{1,n-1}(T)))_{3,1}+1=n-j(T)+1.$
\end{itemize}
Since there are no elements of $\tau$ between $j(T)$ and
$(T)_{3,1}-1$ we get that $n-(T)_{3,1}+1$ is the largest element
of $\tau(Sch(T))$ smaller than $n-j(T)$, so that
$j(Sch(T))=n-(T)_{3,1}+1$ and ${\rm
dist}(Sch(T))=n-j(T)-(n-(T)_{3,1}+1)=(T)_{3,1}-1-j(T)={\rm
dist}(T).$

\bigskip
(ii) is immediate for $r=1$. Assume $r>1$. If $C(T)$ is defined
then $(\pi_{2,n}(T))_{3,1}=(T)_{3,1}$ so that
$(C(T))_{3,1}=(T)_{3,1}-1$. Also, in this case either
$(T)_{2,1}>2$ (this is always the case if $s<r$), or $(T)_{2,1}=2$
and $4\leq(T)_{2,2}<(T)_{3,1}$ (then $(T)_{2,2}-1\in\tau(T)$), so that in
any case $j(T)>1$ and thus $j(C(T))=j(T)-1$, which provides the
result.

\bigskip
(iii) follows  straightforwardly from (i) and (ii)
by definition of $Eqs(T)$.  
\end{proof}

Now, we can prove the following

\begin{proposition}
We have
$\bT_{(r,r,1)}=\coprod\limits_{k=1}^r Eqs(Q(k,k,1),P(r-k,r-k|2k+1))$.
\end{proposition}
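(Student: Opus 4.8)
The plan is to show that every $T\in\bT_{(r,r,1)}$ is equinonsingular to exactly one of the canonical tableaux $(Q(k,k,1),P(r-k,r-k|2k+1))$, and that these $r$ tableaux lie in distinct $Eqs$-classes. For the second (disjointness) part I would compute ${\rm dist}$ of the canonical tableaux: one checks directly that ${\rm dist}(Q(k,k,1),P(r-k,r-k|2k+1))=k$, since in $Q(k,k,1)$ the entry $(T)_{3,1}=k+2$, the largest element of $\tau(T)$ below $k+1$ is $1$ (the entries $1,3,\ldots,k+1$ all sit in row $1$, while $2$ begins row $2$), so $j(T)=1$ and ${\rm dist}=k+2-1-1=k$. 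By Lemma \ref{lemma-4.1}(iii), ${\rm dist}$ is constant on $Eqs$-classes, hence the $r$ classes indexed by $k=1,\ldots,r$ are pairwise disjoint. This gives $\bT_{(r,r,1)}\supseteq\coprod_{k=1}^r Eqs(Q(k,k,1),P(r-k,r-k|2k+1))$ with the union disjoint; it remains to prove the reverse inclusion, i.e. that every $T$ falls into one of them.

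For the covering part I would argue by induction on $n=2r+1$, the base case being trivial. Given $T\in\bT_{(r,r,1)}$, set $d={\rm dist}(T)$; the target is to reach $(Q(d,d,1),P(r-d,r-d|2d+1))$ within $Eqs(T)$. The idea is to use the cyclic procedure $C$ repeatedly to push the entry $(T)_{3,1}$ downward: by Lemma \ref{lemma-4.1}(ii), as long as $C$ is defined we have $(C(T))_{3,1}=(T)_{3,1}-1$ and $j(C(T))=j(T)-1$, so after $j(T)-1$ steps we reach a tableau $T'\in Eqs(T)$ with $j(T')=1$ and $(T')_{3,1}=d+2$. Since $j(T')=1$ means $1\in\tau(T')$, i.e. $(T')_{2,1}=2$, and since there is no element of $\tau(T')$ strictly between $1$ and $d+1$, the entries $1,3,4,\ldots,d+1$ occupy positions forcing $T'$ to begin exactly like $Q(d,d,1)$ in its first $d$ columns up to the box containing $d+2$ in the third row; more precisely one shows $(T'_1,\ldots,T'_{d+1})$, restricted appropriately, together with the fact that $\sh(\pi_{1,d+2}(T'))=(d+1,?,1)$ — here one must be careful about the shape, so I would instead argue that after the $C$-reductions the subtableau on entries $1,\ldots,2d+1$ is forced and the rest standardizes to an element of $\bT_{(r-d,r-d)}$. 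Then Proposition \ref{proposition-3.4} splits off the two-row piece, Theorem \ref{theorem-3.7} (via $P(r-d,r-d)\in Eqs$ of any two-row tableau) handles it, and one concatenates to land on the canonical form.

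The main obstacle I expect is the bookkeeping in the covering step: verifying that after applying $C$ enough times the "head" of the tableau on entries $1,\ldots,2d+1$ is genuinely forced to be $Q(d,d,1)$, and in particular that $\sh(\pi_{1,2d+1}(T'))=(d,d,1)$ rather than some other shape with a longer third column-region. This requires combining the constraint $j(T')=1$ (so the first $d+1$ small entries are as in a hook-like pattern in rows $1,2$) with the constraint $(T')_{3,1}=d+2$ and the monotonicity of ${\rm dist}$ under $C$; one then invokes the $(r,r)$-case reasoning from the proof of Theorem \ref{theorem-3.7} inside columns $\geq$ the split point. A clean way to organize this is: first prove a sub-lemma that any $T\in\bT_{(r,r,1)}$ with ${\rm dist}(T)=d$ satisfies $(T_{[1,d]},\text{rest})$ admits $i$ with $(T)_{1,i+1}=\varsigma_i+1$ allowing a split once $j(T)$ has been driven to $1$, and then finish by induction as above. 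I would present the argument with the $C$-reduction to $j=1$ as the first displayed step, the structural identification of the reduced tableau as the second, and the Proposition \ref{proposition-3.4} split plus two-row case as the conclusion.
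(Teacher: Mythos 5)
The disjointness half of your argument is correct and is exactly the paper's: ${\rm dist}(Q(k,k,1),P(r-k,r-k|2k+1))=k$ together with Lemma \ref{lemma-4.1}(iii) separates the classes. The problem is in the covering half, and it is not mere bookkeeping: the structural claim you rely on is false. After driving $j(T)$ to $1$ (which does work, since one checks that $C(T)$ is undefined only when $(T)_{2,1}=2$ and $(T)_{3,1}<(T)_{2,2}$, which already forces $j(T)=1$), the subtableau on the entries $1,\ldots,2d+1$ is \emph{not} forced to be $Q(d,d,1)$, and no split of the form $(T)_{1,i+1}=\varsigma_i+1$ need exist. Concretely, take $r=3$, $n=7$ and
$$T'=\begin{array}{ccc}1&3&5\\ 2&6&7\\ 4&&\end{array},$$
which is standard with $\tau(T')=\{1,3,5\}$, $(T')_{3,1}=4$, hence $j(T')=1$ and ${\rm dist}(T')=2$. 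Here the entry $5$ sits at $(1,3)$ rather than $(2,2)$, so $\pi_{1,5}(T')$ has shape $(3,1,1)$, not $(2,2,1)$; there is no admissible split after column $2$ (one would need $(T')_{1,3}=6$); and $C(T')$ is undefined (the jeu de taquin path for removing $1$ ends at the box $(3,1)$ because $2<3$ and $4<6$). So your procedure stalls on this tableau without reaching $(Q(2,2,1),P(1,1|5))$, and the "sub-lemma" you propose to prove is exactly the false statement.

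The missing idea is how to make progress precisely in this configuration ($(S)_{2,1}=2$, $3<(S)_{3,1}<r+2$, $(S)_{2,2}>(S)_{3,1}$), where $C$ is unavailable. The paper does not try to force a canonical head and split off a two-row tail; instead it reduces $n$ by $2$: it produces $U\in Eqs(S)$ with $(U)_{1,r}=n-1$ and $(U)_{2,r}=n$, strips the last column by Theorem \ref{theorem-2.3}, and inducts on $r$. In the easy configuration $(S)_{2,2}<(S)_{3,1}$ one takes $U=C^2(S)$; in the hard configuration above one must first apply the Sch\"utzenberger involution (which reflects $\tau$ via $i\mapsto n-i$ and preserves ${\rm dist}$ by Lemma \ref{lemma-4.1}(i)) and then apply $C$ the appropriate number of times to the transformed tableau. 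That use of $Sch$ as an $Eqs$-move, absent from your proposal, is what rescues the argument; without it the covering step does not go through.
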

\begin{proof}
Note that ${\rm dist}(Q(k,k,1),P(r-k,r-k|2k+1))=k$. Thus, by Lemma
\ref{lemma-4.1}, for any $T\in Eqs(Q(k,k,1),P(r-k,r-k|2k+1))$ one
has ${\rm dist}(T)=k$. It remains to show that ${\rm dist}(T)=k$
implies
$$(Q(k,k,1),P(r-k,r-k|2k+1))\in Eqs(T).$$

This is trivially true for $r=1.$ Assume this is true for $r-1$
and show for $r.$ First note that  for  any $T\in\bT_{(r,r,1)}$
one has either $(T)_{2,1}=2$ or $(C^{(T)_{2,1}-2}(T))_{2,1}=2$
exactly as in the case $(r,r).$ Thus, there exists $S\in Eqs(T)$
such that $(S)_{2,1}=2.$
\begin{itemize}
\item[(a)] If $(S)_{2,2}<(S)_{3,1}$ then $U:=C^2(S)$ is defined.
Note that $(U)_{1,r}=n-1$ and $(U)_{2,r}=n$. Indeed
$$C(S)=\begin{array}{ccccc}
1&(S)_{1,2}-1&\ldots&(S)_{1,r-1}-1&(S)_{1,r}-1\\
(S)_{2,2}-1&(S)_{2,3}-1&\ldots&(S)_{2,r}-1&n\\
(S)_{3,1}-1&&&&\\
\end{array}$$
and respectively, since
$(C(S))_{1,i}=(S)_{1,i}-1<(C(S))_{2,i-1}=(S)_{2,i}-1$ one has
$$ U=C^2(S)= \begin{array}{ccccc}
(S)_{1,2}-2&(S)_{1,3}-2&\ldots&(S)_{1,r}-2&n-1\\
(S)_{2,2}-2&(S)_{2,3}-2&\ldots&(S)_{2,r}-2&n\\
(S)_{3,1}-2&&&&\\
\end{array}.$$
Further, by induction, $(Q(k,k,1),P(r-1-k,r-1-k|2k+1))\in
Eqs(\pi_{1,n-2}(U))$ for $k={\rm dist}(U)$, so that one gets
$(Q(k,k,1),P(r-k,r-k|2k+1))\in Eqs(T)$ by the definition of
$Eqs(T)$.

\item[(b)] If $(S)_{2,2}>(S)_{3,1}$ then in particular ${\rm
dist}(S)=(S)_{3,1}-1-1=(S)_{3,1}-2$ and:
\begin{itemize}
\item{} $3\leq (S)_{3,1}\leq r+2$;
\item{} $(S)_{1,i}=i+1$ for any $i\ :\ 2\leq i\leq (S)_{3,1}-2;$
\item{} $(S)_{2,r}=n.$
\end{itemize}

If $(S)_{3,1}=3$ then $S=(Q(1,1,1),\pi_{4,n}(S))$ and, by the
proof of Theorem \ref{theorem-3.7}, we have $P(r-1,r-1)\in
Eqs(St(\pi_{4,n}(S)))$. Thus, $(Q(1,1,1),P(r-1,r-1|3))\in Eqs(T)$.

If $(S)_{3,1}=r+2$ then $S=Q(r,r,1)$.

Thus, it remains to consider the case $3< (S)_{3,1}< r+2.$ Let us
show that in this case there exists $U\in Eqs(S)$ such that
$(U)_{1,r}=n-1,\ (U)_{2,r}=n$. As in (a), this provides by
induction hypothesis that $(Q(k,k,1),P(r-k,r-k|2k+1))\in Eqs(T)$.

Put $m:=(S)_{3,1}.$ Note that $1,m-1\in\tau(S)$. As well, since
$m<r+2$ one has $r+2\leq (S)_{1,r}\leq n-1$, and in particular
$(S)_{1,r}>m-1$ on one hand and $(S)_{1,r}$ is the maximal element of
$\tau(S)$ on the other hand. Put $l:=(S)_{1,r}$. One has
$\tau(S)=\{1,m-1,\ldots, l\}.$ If $l=n-1$ we are done.

If $l<n-1$  consider $S':=Sch(S).$ One has
$\tau(S')=\{n-l,\ldots,n-m+1,n-1\}$. Thus,
$$S'=\begin{array}{llcccccc}
1 &2&\cdots\  n-l   &\cdots&\cdots\cdots&\cdots&n-m+1\\
n-l+1&\cdots&\cdots\cdots    &\cdots&n-m+2& \cdots&  n-1\\
n   &&&&&&\\
\end{array}.$$
Set $U:=C^{n-l+1}(S')$. Let us show that one has $(U)_{1,r}=n-1,\
(U)_{2,r}=n.$ Consider first $V:=C^{n-l-1}(S').$ Since the first
$n-l$ entries of the first row of $S'$ are $1,2,\ldots,n-l$ we get
that $(V)_{2,1}=n-l+1-(n-l-1)=2$ and correspondingly
$(V)_{3,1}=n-(n-l-1)=l+1.$ Also, since $l\geq r+2$ we get that not
more than the last $r-2$ elements of the second row of $V$ can be
$l+2,l+3,\ldots, n$. Thus, $(V)_{2,2}<l+2$. Therefore
$(V)_{2,2}<(V)_{3,1}$ and $C(V)$ is defined. Summarizing, we get
$$V=\begin{array}{cccc}1&x_2&\ldots&x_r\\
2&y_2&\ldots&n\\
l+1&&&\\
\end{array},\quad
C(V)=\begin{array}{ccccc}
1&x_2-1&\ldots&x_{r-1}-1&x_r-1\\
y_2-1&y_3-1&\ldots&n-1&n\\
l&&&&\\
\end{array},$$
$$
U=C^2(V)=\begin{array}{ccccc}
1&x_3-2&\ldots&x_r-2&n-1\\
y_2-2&y_3-2&\ldots&n-2&n\\
l-1&&&&\\
\end{array}.$$
\end{itemize}
The proof is now complete.  
\end{proof}

\subsection{A set of special flags in the component $\K^{Q(k,k,1)}$}

\label{4.3}

It remains to show that $\K^{Q(k,k,1)}$ is nonsingular. To show
this we use the results of \cite[\S 1.3]{Fr} which we formulate in
short in the beginning of this subsection.

Let $\lambda^*=(\lambda_1^*,\ldots,\lambda_k^*)$ be a partition of
$n$ and let $\{e_i\}_{i=1}^n$ be a fixed standard (Jordan) basis
of $V=\Co^n$. The basis is enumerated according to some standard
tableau $T\in\bT_{\lambda}$
$$T=\begin{array}{llll}
1&t_{1,2}&\ldots&t_{1,k}\\
t_{2,1}&\ldots&&\\
\vdots&\vdots&&\\
t_{\lambda_1^*,1}&\cdots&&\\
\end{array}
$$
such that $u\in {\rm End}(V)$ of Jordan form $J(u)=\lambda$ acts
on this basis by
$$u(e_{t_{i,j}})=\left\{\begin{array}{ll}
e_{t_{i,j-1}}&{\rm if}\ j\geq 2;\\
0&{\rm otherwise}.\\
\end{array}\right.$$
For any $\sigma\in \bS_n$  put $F_\sigma=(V_0,\ldots,V_n)$ to be the
flag such that, for any $i\ :\ 1\leq i\leq n$, $V_i={\rm
Span}\{e_{\sigma(1)},\ldots e_{\sigma(i)}\}$. We call such flags
Jordan flags.

Note that $F_\sigma\in\B_u$ if and only if for any $i\ :\ 1\leq
i\leq n$ one has $u(e_{\sigma(i)})\in V_{i-1}.$ Put
$\bS_u:=\{\sigma\in\bS_n\ :\ F_\sigma\in\B_u\}.$

Using \cite[Lemma 1.3]{Fr} one has
\begin{proposition}
\label{proposition-4.3}
\begin{itemize}
\item[\rm (i)] $\K^S\cap\{F_\sigma\}_{\sigma\in\bS_u}\ne\emptyset$ for
any $S\in\bT_{\lambda}.$
\item[\rm (ii)] The component $\K^S$ is nonsingular if and
only if  any $F\in \K^S\cap\{F_\sigma\}_{\sigma\in\bS_u}$
is a nonsingular point of $\K^S.$

\end{itemize}
\end{proposition}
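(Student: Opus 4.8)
The plan is to derive both statements from \cite[Lemma 1.3]{Fr}, which describes the intersection of a component $\K^S$ with the (finite) set of Jordan flags $\{F_\sigma\}_{\sigma\in\bS_u}$ in terms of the combinatorics of $S$. First I would recall that the $Z(u)$-orbits on $\B_u$ are parametrized, via Spaltenstein's construction, so that each $F_\sigma$ lies in a unique open dense orbit of some component; the key input from \cite[Lemma 1.3]{Fr} is that $\K^S$ contains at least one Jordan flag, and more precisely that the Jordan flags in $\K^S$ are exactly the $F_\sigma$ for $\sigma$ ranging over an explicit set of permutations attached to $S$. Statement (i) is then immediate: the set $\K^S\cap\{F_\sigma\}_{\sigma\in\bS_u}$ is nonempty because this explicit set of permutations is nonempty (it contains at least the ``row-reading'' permutation of $S$, or whichever canonical choice \cite{Fr} singles out).

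For statement (ii), one direction is trivial: if $\K^S$ is nonsingular then every point of $\K^S$, in particular every Jordan flag lying on it, is a nonsingular point. The substance is the converse. Here I would use that the torus $\mathbf{T}\subset Z(u)$ of diagonal matrices (in the fixed Jordan basis) acts on $\K^S$ with the Jordan flags $F_\sigma\in\K^S$ as its fixed points — this is because $F_\sigma$ is spanned by coordinate subspaces, hence $\mathbf{T}$-stable, and conversely any $\mathbf{T}$-fixed flag is of this form. Since $\K^S$ is a projective variety with a torus action having finitely many fixed points, the Bialynicki-Birula argument applies: the singular locus of $\K^S$ is closed and $\mathbf{T}$-stable, hence if nonempty it must contain a $\mathbf{T}$-fixed point, i.e.\ some Jordan flag $F_\sigma\in\K^S$. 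Contrapositively, if every Jordan flag in $\K^S$ is a smooth point, then the singular locus is empty and $\K^S$ is nonsingular.

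The main obstacle is making precise the claim that the $\mathbf{T}$-fixed points of $\K^S$ are exactly the Jordan flags it contains, and that this torus is large enough to have isolated fixed points on $\K^S$. The inclusion ``Jordan flags in $\K^S$ $\subseteq$ $\mathbf{T}$-fixed points of $\K^S$'' is clear; for the reverse inclusion one notes that $\B_u^{\mathbf{T}}=\{F_\sigma:\sigma\in\bS_u\}$ (a $\mathbf{T}$-fixed flag is a flag of coordinate subspaces preserved by $u$), and $\K^S$ being $\mathbf{T}$-stable gives $\K^S{}^{\mathbf{T}}=\K^S\cap\B_u^{\mathbf{T}}=\K^S\cap\{F_\sigma\}_{\sigma\in\bS_u}$, which is finite by (i) and finiteness of $\bS_n$. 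With that in hand the Bialynicki-Birula/fixed-point argument closes the proof; alternatively, if \cite{Fr} already records the needed localization statement, I would simply cite it and skip the torus discussion. Either way the argument is short once the correspondence between Jordan flags and torus fixed points is nailed down.
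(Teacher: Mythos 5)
The paper does not reproduce a proof of Proposition~\ref{proposition-4.3}: it is obtained by direct citation of \cite[Lemma 1.3]{Fr}, so there is no internal argument to compare against. Your sketch is, however, correct in substance and is essentially the standard fixed-point argument that underlies the cited lemma. A few points deserve tightening.

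First, the full diagonal torus in the Jordan basis does \emph{not} lie in $Z(u)$; only diagonal matrices whose eigenvalues are constant along each Jordan chain commute with $u$. So $\mathbf{T}$ should be taken to be $Z(u)\cap\{\text{diagonal matrices}\}$, a torus of rank equal to the number of Jordan blocks. Second, with this smaller torus one must actually verify that $\B_u^{\mathbf T}=\{F_\sigma:\sigma\in\bS_u\}$: a $\mathbf T$-fixed flag decomposes as a direct sum of subspaces of the individual Jordan blocks, and it is only because a $u$-stable subspace of a single Jordan block is necessarily a $\ker u^m$, i.e.\ a coordinate subspace, that one recovers exactly the Jordan flags; this step is worth stating, since with a generic small torus the fixed locus on $\B$ is not discrete. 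Third, what you invoke is the Borel fixed-point theorem (a connected solvable group acting on a nonempty projective variety has a fixed point), applied to the singular locus of $\K^S$, which is closed in $\K^S$ (hence projective) and stable under $\mathbf T\subset Z(u)$; Bialynicki--Birula is not needed. The same fixed-point argument applied to the whole of $\K^S$ also gives part (i) directly, without appealing separately to the explicit description in \cite{Fr}. With those adjustments the proposal is a complete and correct proof of the proposition.
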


Let us apply Proposition \ref{proposition-4.3} to our situation.
Put $n=2k+1$. Let $\{e_i\}_{i=1}^n$ be a standard (Jordan) basis
of $V=\Co^n$ corresponding to
$$T=\begin{array}{llll}
1&3&\cdots&n-2\\
2&4&\cdots&n-1\\
n&&\\
\end{array}$$
so that for the fixed nilpotent $u\in {\rm End}(V)$ of Jordan form
$(k,k,1)$ one has
$$u(e_i)=\left\{\begin{array}{ll} 0& {\rm if}\ i=1,2,n;\\
e_{i-2}&{\rm otherwise}.\\
\end{array}\right.
$$

Let $F_\sigma$ denote a Jordan flag. Note that $\sigma\in\bS_u$ if
and only if $\sigma^{-1}$ is increasing on $\{1,3,\ldots,n-2\}$
and $\{2,4,\ldots,n-1\}$, that is $\sigma$ is a shuffling of the
sets $\{1,3,\ldots,n-2\},\ \{2,4,\ldots,n-1\}$ and $\{n\}.$

Note that $\ker u={\rm Span}\{e_1,e_2,e_n\}\subset V_{k+2}$ for
any flag $F=(V_0,\ldots, V_n)\in \F_{Q(k,k,1)}$. Since
$\K^{Q(k,k,1)}=\ov{\F_{Q(k,k,1)}}$ we get that $\ker u\subset
V_{k+2}$ for any $F\in \K^{Q(k,k,1)}.$ In particular for
$F_\sigma\in \K^{Q(k,k,1)}$ one has
$$\sigma^{-1}(n)\leq k+2. \eqno{(5)}$$

For $d\in\{3,\ldots,k+2\}$ put
$$(d):=(d,n,n-1,\ldots,d+1)=\left(\begin{array}{ccccccc}
1&\ldots&d-1&d&d+1&\ldots&n\\
1&\ldots&d-1&n&d&\ldots&n-1\\
\end{array}\right)\in \bS_n.$$
We have the following
\begin{lemma}
\label{lemma-4.4}
\begin{itemize}
\item[\rm (i)] $\{F_{(d)}\}_{d=3}^{k+2}\cap \K^{Q(k,k,1)}\ne\emptyset$.
\item[\rm (ii)] The component $\K^{Q(k,k,1)}$ is nonsingular if and
only if every Jordan flag $F_{(d)}$ such that $F_{(d)}\in \K^{Q(k,k,1)}$ is a
nonsingular point of $\K^{Q(k,k,1)}$.
\end{itemize}
\end{lemma}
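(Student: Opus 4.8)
\textbf{Proof proposal for Lemma \ref{lemma-4.4}.}
The plan is to deduce this lemma from Proposition \ref{proposition-4.3} by showing that the only Jordan flags $F_\sigma$ ($\sigma\in\bS_u$) that can possibly lie in $\K^{Q(k,k,1)}$ are precisely the flags $F_{(d)}$ with $d\in\{3,\ldots,k+2\}$. Once that is established, part (i) follows from Proposition \ref{proposition-4.3}(i) (the intersection $\K^{Q(k,k,1)}\cap\{F_\sigma\}_{\sigma\in\bS_u}$ is nonempty, and it is contained in $\{F_{(d)}\}_{d=3}^{k+2}$), and part (ii) follows from Proposition \ref{proposition-4.3}(ii) (nonsingularity of $\K^{Q(k,k,1)}$ is equivalent to every such $F_\sigma$ being a smooth point, and by the containment this is the same as every $F_{(d)}$ in the component being a smooth point).

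First I would recall the constraints a Jordan flag $F_\sigma$ must satisfy to lie in $\F_{Q(k,k,1)}$, and hence (by taking closures) the weaker constraints it must satisfy to lie in $\K^{Q(k,k,1)}$. From the shape of $Q(k,k,1)$ and the definition of $\F_T$ via the chain $Y(u_{|V_i})=\sh(\pi_{1,i}(Q(k,k,1)))$, the diagram $\sh(\pi_{1,i}(Q(k,k,1)))$ never has a cell in the third row until $i$ reaches the position where $k+2$ sits in $Q(k,k,1)$, which is the $(k+1)$-st step; equivalently $\ker u\subset V_{k+2}$ for every flag in $\F_{Q(k,k,1)}$, as already noted in the text, giving $\sigma^{-1}(n)\leq k+2$ as in $(5)$. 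I would then argue that the two-row part forces more: the flag restricted to the first $k+2$ steps must follow the chain of a two-row tableau of shape obtained from $Q(k,k,1)$ by forgetting the lone third-row box, and since $e_1,e_2$ already span $\ker u\cap\mathrm{Span}\{e_1,\ldots,e_{n-1}\}$ and the basis is the canonical one attached to the two-row tableau $P(k,k)$-type pattern, any $\sigma\in\bS_u$ with $F_\sigma\in\K^{Q(k,k,1)}$ must in fact be the identity on $\{1,\ldots,d-1\}$ and on $\{d+1,\ldots,n\}$ shifted, with $n$ inserted at position $d:=\sigma^{-1}(n)$ — that is, $\sigma=(d)$. Here the point is that among shufflings of $\{1,3,\ldots,n-2\}$, $\{2,4,\ldots,n-1\}$, $\{n\}$, the condition that the $u$-jump pattern match $Q(k,k,1)$ pins down everything except the slot of $n$.

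The main obstacle I expect is the verification that no Jordan flag $F_\sigma$ with $\sigma\neq(d)$ — for instance one where the two odd/even threads are genuinely interleaved before position $d$ — can lie in the \emph{closure} $\K^{Q(k,k,1)}=\overline{\F_{Q(k,k,1)}}$, as opposed to merely not lying in $\F_{Q(k,k,1)}$ itself. To handle this I would use the standard fact (implicit in \cite{Sp1}, \cite{vL}) that $\K^T\cap\{F_\sigma\}_{\sigma\in\bS_u}$ is controlled by a combinatorial/Bruhat-type order on the tableaux, so that $F_\sigma\in\K^T$ forces $\sh(\pi_{1,i}(Q(k,k,1)))$ to dominate the shape determined by $\sigma$ at each step $i$; running this step by step and using that $Q(k,k,1)$ is "as left-justified as possible" among shape-$(k,k,1)$ tableaux with the third-row box at a fixed height shows the only survivors are the $F_{(d)}$. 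Having reduced the ambient finite set of relevant Jordan flags to $\{F_{(d)}\}_{d=3}^{k+2}$, the lemma is then immediate from Proposition \ref{proposition-4.3}.
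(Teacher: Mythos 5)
Your reduction rests on the claim that the only Jordan flags $F_\sigma$, $\sigma\in\bS_u$, that can lie in $\K^{Q(k,k,1)}$ are the flags $F_{(d)}$, $3\leq d\leq k+2$. That claim is false, already inside $\F_{Q(k,k,1)}$ itself. Take $k=2$, $n=5$, so $u(e_1)=u(e_2)=u(e_5)=0$, $u(e_3)=e_1$, $u(e_4)=e_2$, and let $\sigma$ be the shuffling with $(\sigma(1),\ldots,\sigma(5))=(2,1,3,5,4)$. The chain of Jordan types of $u$ restricted to the $V_i$ is $(1),(1,1),(2,1),(2,1,1),(2,2,1)$, which is exactly the chain of $Q(2,2,1)$, so $F_\sigma\in\F_{Q(2,2,1)}\subset\K^{Q(2,2,1)}$; yet $\sigma\neq(d)$ for every $d$, since each $(d)$ fixes $1$. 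The point you miss is that the Jordan-type chain does not ``pin down everything except the slot of $n$'': it is blind, for instance, to the order in which the two kernel vectors $e_1,e_2$ enter the flag, and more generally to many interleavings of the odd and even threads. Consequently neither (i) nor (ii) follows from your containment, and your closing appeal to a Bruhat-type order would at best bound which Jordan flags lie in the closure; it cannot repair the argument for (ii), because knowing \emph{which} Jordan flags lie in $\K^{Q(k,k,1)}$ does not transfer smoothness at the $F_{(d)}$ to smoothness at the other Jordan flags in the component.

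The missing idea is an equivariant degeneration. The paper works with the stabilizer $Z(u)=\{g\in GL(V):gug^{-1}=u\}$, which is connected and preserves each component, so the singular locus of $\K^{Q(k,k,1)}$ is closed and $Z(u)$-stable; hence if some $F'\in\overline{Z(u)F}$ is a nonsingular point of the component, then so is $F$. One then exhibits explicit one-parameter families $A_t^{(i)},B_t^{(j)}\in Z(u)$ whose limits move an arbitrary $F_\sigma\in\K^{Q(k,k,1)}$, $\sigma\in\bS_u$, inside $\overline{Z(u)F_\sigma}\subset\K^{Q(k,k,1)}$, first so that $n$ enters the flag after $1$ and $2$, and then so that $\sigma^{-1}$ is increasing on $\{2,\ldots,n-1\}$; any $\sigma$ with these properties equals $(d)$ for $d=\sigma^{-1}(n)$, with $d\leq k+2$ by $(5)$ and $d\geq 3$ by the first step. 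Combined with Proposition \ref{proposition-4.3}, this yields both (i) (some Jordan flag lies in the component, and its $Z(u)$-orbit closure contributes an $F_{(d)}$ to it) and (ii) (smoothness at the reachable $F_{(d)}$ propagates back to every Jordan flag, hence to the whole component). Without this $Z(u)$-argument, or some substitute for it, your proof does not go through.
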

\begin{proof}
Put $Z(u)=\{g\in GL(V):gug^{-1}=u\}$ to be the stabilizer of $u$.
Define $A_t^{(i)},B_t^{(j)}\in GL(V)$ (for $j\geq 2$) as follows
\begin{eqnarray*}
&A_t^{(i)}(e_s)=\left\{\begin{array}{ll}
e_s&{\rm if}\ s<n;\\
e_n+te_i&{\rm if}\ s=n;\\
\end{array}\right.\\
&B_t^{(j)}(e_s)=\left\{\begin{array}{llll}
e_s&{\rm if}\ s\not\equiv j\mbox{ mod 2} \mbox{ \ or } s\in\{1,n\};\\
e_s+te_{s-1}&{\rm if}\ s\equiv j\mbox{ mod 2} \mbox{ \ and } s\notin\{1,n\}.\\
\end{array}\right.\end{eqnarray*}
Note that  $A_t^{(i)}\in Z(u)$ for $i=1,2$ and $B_t^{(j)}\in Z(u)$
for $j<n.$

By Proposition \ref{proposition-4.3},
$\K^{Q(k,k,1)}\cap\{F_\sigma\}_{\sigma\in\bS_u}\ne\emptyset$, and
$\K^{Q(k,k,1)}$ is nonsingular if and only if any $F_\sigma\in
\K^{Q(k,k,1)}$ is a nonsingular point of it. Note that if $F'\in
\ov{Z(u)(F)}$ is a nonsingular point of $\K^T$ then $F$ is a
nonsingular point of $\K^T.$ Thus, it is sufficient to show that
the closure of the $Z(u)$-orbit of $F_{\sigma}$ for $\sigma\in
\bS_u$ contains some flag $F_{(d)}$. Actually,
\begin{itemize}
\item[(a)] First, note that it is enough to consider only
$F_\sigma\in \K^{Q(k,k,1)}$ for $\sigma$ such that
$\sigma^{-1}(n)>\sigma^{-1}(i)$ for $i=1,2.$ Indeed, if
$F_\sigma\in \K^{Q(k,k,1)}$ is such that
$\sigma^{-1}(n)<\sigma^{-1}(i)$ for $i\in\{1,2\}$ one has
$\sigma':=(i,n)\sigma$   (satisfying
${\sigma'}^{-1}(n)>{\sigma'}^{-1}(i)$) which is obtained as
$F_{\sigma'}=\lim\limits_{t\rightarrow\infty}A_t^{(i)}(F_\sigma)$,
thus, $F_{\sigma'}$ belongs to $\ov{Z(u)(F_\sigma)}$.

\item[(b)] Next, exactly in the same way it is enough to consider
only $F_\sigma$ for $\sigma$ such that
$\sigma^{-1}(i-1)<\sigma^{-1}(i)$ for every $1<i<n$. Indeed, for
$F_\sigma\in \K^{Q(k,k,1)}$  such that
$\sigma^{-1}(i-1)>\sigma^{-1}(i)$ for some $i: 2\leq i\leq n-1$,
there is $\sigma'\in\bS_u$ satisfying
${\sigma'}^{-1}(i-1)<{\sigma'}^{-1}(i)$ which is obtained as
$F_{\sigma'}=\lim\limits_{t\rar\infty}B_t^{(i)}(F_\sigma)$ thus,
$F_{\sigma'}$ belongs to $\ov{Z(u)(F_\sigma)}.$
\end{itemize}
Note that $\sigma$ satisfying these two properties is equal to $(d)$ for
$d=\sigma^{-1}(n)$. One has  $d\leq k+2$ by $(5)$ and $d\geq 3$ by (a).  
\end{proof}

Let $B:={\rm Stab}^t(F_{(d)})=\{A\in GL(V):
A^t(F_{(d)})=F_{(d)}\}$ be the transposition of the Borel subgroup
fixing the flag $F_{(d)}$. Then $\Omega_{(d)}:=B F_{(d)}$ is an
open subset in the flag variety $\B.$  For
$F=(V_0,\ldots,V_n)\in\Omega_{(d)}$ there is a unique basis
$\eta_1,\ldots,\eta_n$ such that
$V_i=\mathrm{Span}\{\eta_{1},\ldots,\eta_{i}\}$ where
$$\eta_{i}=e_{(d)(i)}+\sum_{j=i+1}^n\phi_{i,j}e_{(d)(j)}$$ for some
$\phi_{i,j}\in\Co$.
The maps $F\mapsto \phi_{i,j}$ are algebraic and the map $F\mapsto
(\phi_{i,j})_{1\leq i< j\leq n}$ is an isomorphism from
$\Omega_{(d)}$ to the affine space $\Co^{\frac{n(n-1)}{2}}$.

\medskip

To show the nonsingularity of the component $\K^{Q(k,k,1)}$ we will construct
for every $d\in\{3,\ldots,k+2\}$ a
closed immersion $\Phi:\Co^{k+2}\rightarrow \Omega_{(d)}$
satisfying the two conditions:
\begin{itemize}
\item[(A)] For $(a_1,\ldots,a_{k+2})$ such that $a_i\ne 0$ for any
$i$ one has $\Phi(a_1,\ldots,a_{k+2})\in \F_{Q(k,k,1)}$;
\item[(B)] $\Phi(0,\ldots,0)=F_{(d)}$.
\end{itemize}
By (A) we get that $\Phi:\Co^{k+2}\rar \K^{Q(k,k,1)} $ is an
isomorphism on a locally closed subset of $\K^{Q(k,k,1)}$. Since
$\dim \K^{Q(k,k,1)}=k+2$, the image of $\Phi$ is an open subset of $\K^{Q(k,k,1)}$.
Moreover, being isomorphic to $\Co^{k+2}$, it is  nonsingular. By
(B) the flag $F_{(d)}$ belongs to it, hence $F_{(d)}$ is a nonsingular
element of $\K^{Q(k,k,1)}$. By Lemma \ref{lemma-4.4}  this provides that
$\K^{Q(k,k,1)}$ is nonsingular.

\subsection{A preliminary construction before proving the nonsingularity of $\K^{Q(k,k,1)}$}

\label{4.6}

To construct $\Phi$ we need some preliminary
construction of ``right'' vectors. Let $(e_1,\ldots,e_n)$ be as in
\ref{4.3}. Let $W_1={\rm Span}\{e_1,e_2,\ldots,e_{n-3}\}$ and
$W_2={\rm Span}\{e_3,e_4,\ldots,e_{n-1}\}.$ Thus, $u:W_2\rar W_1$
is an isomorphism. Let $w:W_1\rar W_2$ be its inverse, that is
$w(e_i)=e_{i+2}$ for any $i\ :\ 1\leq i\leq n-3$. We extend the
action of $w$ to ${\rm Span}\{e_1,\ldots,e_{n-1}\}$ putting
$w(e_{n-2})=w(e_{n-1})=0.$

Given $\alpha_3,\ldots,\alpha_{k+1}\in\Co$ we construct
$\{v_i\}_{i=1}^{k+1}\in V$ as follows. Let $v_1=e_1$ and
$v_2=e_2.$ For $i=3,\ldots,k+1$ set $v_i$ inductively by
$$v_i:=w(v_{i-2})+\alpha_iw(v_{i-1}).$$
One has
\begin{lemma}
\label{lemma-4.6}
\begin{itemize}
\item[\rm (i)] For $i\ :\ 3\leq i\leq k+1$ one has
$u(v_i)=v_{i-2}+\alpha_i v_{i-1}$. In particular,
 $V_i:={\rm Span}\{v_1,\ldots,v_i\}$ is $u$-stable
for every $i\ :\ 1\leq i\leq k+1$.
\item[\rm (ii)] For $i\ :\ 2\leq
i\leq k+1$ one has   $v_{i}\in \ker\,u^{i-1}$. Moreover, if
$\alpha_j\ne 0$ for every $j\ :\ 3\leq j\leq k+1$, then $v_i\not\in
\ker u^{i-2}.$
\item[\rm (iii)] Set
$\widetilde{\alpha}_1=\widetilde{\alpha}_2=0$. For $i\geq 3$, set
$\widetilde{\alpha}_i=\widetilde{\alpha}_{i-2}+\alpha_i$. Then
$$v_i-e_i-\widetilde{\alpha}_ie_{i+1}\in {\rm Span}\{
e_{i+2},\ldots,e_{n-1}\}$$ for every $i\ :\ 1\leq i\leq k+1$.
\end{itemize}
\end{lemma}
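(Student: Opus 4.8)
The plan is to prove all three items together by a single induction on $i$, with $v_1=e_1$, $v_2=e_2$ as base values, repeatedly feeding the defining recursion $v_i=w(v_{i-2})+\alpha_iw(v_{i-1})$ and using two structural facts: $w$ raises indices by $2$ (it sends $e_j\mapsto e_{j+2}$ for $j\le n-3$), and $u\circ w=\mathrm{id}$ on $W_1={\rm Span}\{e_1,\ldots,e_{n-3}\}$ (since $u(w(e_j))=u(e_{j+2})=e_j$ for $1\le j\le n-3$). The backbone is the following refinement of (iii), which I would establish first: for $3\le i\le k+1$,
\[v_i-e_i-\widetilde{\alpha}_ie_{i+1}\in{\rm Span}\{e_{i+2},\ldots,e_{2i-2}\}\qquad(\text{the span being }\{0\}\text{ when }i=3),\]
and the coefficient of $e_{2i-2}$ in $v_i$ equals $\prod_{j=3}^{i}\alpha_j$. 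This is proved by induction on $i$: for $i=3$ one has $v_3=w(e_1)+\alpha_3w(e_2)=e_3+\alpha_3e_4$, which is as claimed since $\widetilde{\alpha}_3=\alpha_3$; for the step ($i\ge 4$) the inductive hypothesis (with $v_2=e_2$ supported on $\{2\}$) shows that $v_{i-2}$ and $v_{i-1}$ are supported on $\{i-2,\ldots,2i-6\}$ and $\{i-1,\ldots,2i-4\}$, hence $w(v_{i-2})$ on $\{i,\ldots,2i-4\}$ and $w(v_{i-1})$ on $\{i+1,\ldots,2i-2\}$; comparing in $v_i=w(v_{i-2})+\alpha_iw(v_{i-1})$ the coefficient of $e_i$ (equal to $1$), of $e_{i+1}$ (equal to $\widetilde{\alpha}_{i-2}+\alpha_i=\widetilde{\alpha}_i$), and of $e_{2i-2}$ (equal to $\alpha_i\prod_{j=3}^{i-1}\alpha_j$) closes the induction. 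In particular (iii) holds; moreover, since $2i-4\le n-3$ for $i\le k+1$, every $v_i$ with $i\le k$ lies in $W_1$.

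Item (i) then follows at once. For $3\le i\le k+1$ we have $i-1\le k$, so $v_{i-2},v_{i-1}\in W_1$, and applying $u\circ w=\mathrm{id}_{W_1}$ gives
\[u(v_i)=u(w(v_{i-2}))+\alpha_iu(w(v_{i-1}))=v_{i-2}+\alpha_iv_{i-1};\]
for $i\in\{1,2\}$, $u(v_i)=u(e_i)=0$. Hence $u(v_j)\in V_{j-1}$ for all $1\le j\le k+1$, so $u(V_i)\subseteq V_{i-1}$ and each $V_i={\rm Span}\{v_1,\ldots,v_i\}$ is $u$-stable.

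For item (ii), the inclusion $v_i\in\ker u^{i-1}$ ($2\le i\le k+1$) follows from (i) by induction on $i$: for $i=2$, $v_2=e_2\in\ker u$; for $i=3$, $u(v_3)=e_1+\alpha_3e_2\in\ker u$, so $v_3\in\ker u^2$; for $i\ge 4$, $v_{i-2}\in\ker u^{i-3}\subseteq\ker u^{i-2}$ and $v_{i-1}\in\ker u^{i-2}$ (using $v_2=e_2\in\ker u$ when $i=4$), whence $u(v_i)\in\ker u^{i-2}$ and $v_i\in\ker u^{i-1}$. For the non-vanishing, suppose $\alpha_j\ne 0$ for all $3\le j\le k+1$ and examine the coefficient of $e_2$ in $u^{i-2}(v_i)$ (the case $i=2$ being just $v_2=e_2\ne 0$). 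By the refined (iii), $v_i$ is a linear combination of $e_i,e_{i+1},\ldots,e_{2i-2}$; here $2i-2$ is even, $e_{2i-2}$ lies in the Jordan string $e_2,e_4,\ldots,e_{n-1}$, and $u^{i-2}(e_{2i-2})=e_{2i-2-2(i-2)}=e_2$, whereas for any other basis vector $e_m$ occurring ($i\le m<2i-2$) one has $u^{i-2}(e_m)=0$ when $m$ is even (its string reaches $e_2$ too soon) and $u^{i-2}(e_m)\in\{0,e_1,e_3,\ldots\}$ when $m$ is odd — in no case a nonzero multiple of $e_2$. Therefore the coefficient of $e_2$ in $u^{i-2}(v_i)$ equals the coefficient of $e_{2i-2}$ in $v_i$, namely $\prod_{j=3}^{i}\alpha_j\ne 0$, so $u^{i-2}(v_i)\ne 0$ and $v_i\notin\ker u^{i-2}$.

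The only genuinely delicate point is the refined version of (iii): one must keep the support of $v_i$ under tight enough control that (a) the arguments $v_{i-1},v_{i-2}$ of $w$ always stay inside $W_1$ — so that $u\circ w=\mathrm{id}$ may be invoked and $w$ does not annihilate anything relevant — and (b) the exact top coefficient $\prod_{j=3}^{i}\alpha_j$ is known, this being the single coefficient that witnesses the non-vanishing in (ii). For that reason I would carry the top coefficient through the induction from the outset rather than proving a bare form of (iii) and recovering it afterwards; everything else is routine bookkeeping with the one recursion.
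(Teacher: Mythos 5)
Your proof is correct, and it follows the same broad induction strategy as the paper's, but it is organized differently and is in one place more rigorous. The paper proves (i), (ii), (iii) in order: (i) is dismissed as immediate from the definition, (ii) is a clean standalone induction that never computes coefficients (once $v_{i-1}\in\ker u^{i-2}$ and $\alpha_{i+1}v_i\notin\ker u^{i-2}$ are known from the inductive hypothesis, the sum $u(v_{i+1})=v_{i-1}+\alpha_{i+1}v_i$ is automatically outside $\ker u^{i-2}$), and (iii) is a coefficient computation tracking only the leading two coefficients. You instead front-load a quantitatively sharper form of (iii) — bounded support $\{e_i,\ldots,e_{2i-2}\}$ together with the explicit top coefficient $\prod_{j=3}^i\alpha_j$ — and derive both (i) and (ii) from it. This buys two things. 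First, it makes (i) genuinely rigorous: the paper's ``follows from the definition'' silently uses $u\circ w=\mathrm{id}$ on $v_{i-2},v_{i-1}$, which requires these vectors to lie in $W_1$ (since $w$ kills $e_{n-2},e_{n-1}$), and your support bound $2i-4\le n-3$ for $i\le k+1$ is precisely what justifies that. Second, it replaces the paper's abstract non-vanishing argument in (ii) by an explicit witness, the $e_2$-coefficient of $u^{i-2}(v_i)$. The trade-off is that your (ii) is heavier than the paper's: the paper's inductive step for the non-vanishing is shorter and avoids the case analysis on parities of $m$. Both proofs are complete; yours is slightly longer but patches a genuine gap in the paper's treatment of (i).
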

\begin{proof}
(i) follows from the definition.

We prove (ii) by induction. It is immediate for $i=2$. Case $i=3$:
by (i) $u(v_3)=e_1+\alpha_3e_2\in \ker u$ that is $v_3\in\ker u^2$
and $v_3\not\in \ker u.$ Assume this is true  for $i\geq 3$ and
show for $i+1.$ By (i), $u(v_{i+1})=v_{i-1}+\alpha_{i+1}v_i$. By
induction hypothesis $v_{i-1},v_i\in\ker u^{i-1}$ and if
$\alpha_{j}\ne 0$ for $j\leq i+1$ one has $
v_{i-1}+\alpha_{i+1}v_i\not\in\ker u^{i-2}$. Thus, $v_{i+1}\in \ker
u^i$ and $v_{i+1}\not\in\ker u^{i-1}.$

We prove (iii) by induction. This is trivially true for $i=1,2$.
Assume it is true for $j\leq i$ and show for $i+1.$ By definition
$v_{i+1}=w(v_{i-1})+\alpha_{i+1}w(v_i)=w(v_{i-1})+(\widetilde{\alpha}_{i+1}-\widetilde{\alpha}_{i-1})w(v_i)$.
Further by induction
$v_{i-1}=e_{i-1}+\widetilde{\alpha}_{i-1}e_i+\sum\limits_{j=i+1}^{n-1}\beta_j
e_j$ and
$v_i=e_i+\widetilde{\alpha}_ie_{i+1}+\sum\limits_{j=i+2}^{n-1}\gamma_j
e_j.$
Thus,
$$\begin{array}{rl}
v_{i+1}=&e_{i+1}+\widetilde{\alpha}_{i-1}e_{i+2}+\sum\limits_{j=i+1}^{n-3}\beta_j
e_{j+2}+(\widetilde{\alpha}_{i+1}-\widetilde{\alpha}_{i-1})(e_{i+2}+\widetilde{\alpha}_ie_{i+3}+
\sum\limits_{j=i+2}^{n-3}\gamma_j e_{j+2})\\
=&e_{i+1}+\widetilde{\alpha}_{i+1}e_{i+2}+\sum\limits_{j=i+3}^{n-1}\lambda_je_j.\\
\end{array}$$
Therefore, $v_{i+1}-e_{i+1}-\widetilde{\alpha}_{i+1}e_{i+2}\in
{\rm Span}\{e_{i+3},\ldots,e_{n-1}\}.$  
\end{proof}

To define vectors $v_{k+2},\ldots, v_{n-1}$ we need some
intermediate construction of vectors $r_1^{(i)},\ldots,r_i^{(i)}$
for $i\geq k+2$. Then set $v_i:=r_i^{(i)}$ for $i\ :\ k+2\leq
i\leq n-1.$

First put $r_j^{(k+1)}:=v_j$ and $\beta_j^{(k+1)}:=\alpha_j$ for
$j\ :\ 1\leq j\leq k+1.$

For $i\geq k+2$ we define vectors $r_j^{(i)}$ and numbers
$\beta_j^{(i)}$ for $j\ :\ 1\leq j\leq i$ by induction procedure.
Set $r_1^{(i)}:=e_1,\ r_2^{(i)}:=e_2$ and
$\beta_1^{(i)}=\beta_2^{(i)}=0$ for any $k+2\leq i\leq n-1.$ For
$j\ :\ 3\leq j\leq i$ set $r_j^{(i)}:=w(r_{j-2}^{(i-1)})$ and
$\beta_j^{(i)}:=\beta_{j-2}^{(i-1)}$. We have the following
\begin{lemma}
\label{lemma-4.7}
Let $i\in\{k+1,\ldots,n-1\}$.
\begin{itemize}
\item[\rm (i)] Note that
$r^{(i)}_j=w(r^{(i)}_{j-2})+\beta_j^{(i)}w(r^{(i)}_{j-1})$ and
$u(r^{(i)}_j)=r^{(i)}_{j-2}+\beta_j^{(i)}r^{(i)}_{j-1}$ for any
$1\leq j\leq i$. Therefore, the subspace ${\rm
Span}\{r^{(i)}_1,\ldots,r^{(i)}_{i}\}$ is $u$-stable. Moreover for
$i\geq k+2$ we have ${\rm
Span}\{r^{(i-1)}_1,\ldots,r^{(i-1)}_{i-1}\}\subset {\rm
Span}\{r^{(i)}_1,\ldots,r^{(i)}_{i}\}$.
\item[\rm (ii)] Note that
$\beta^{(i)}_j=0$ and $r^{(i)}_j=e_j$ for $1\leq j\leq 2(i-k)$.
Also $\beta^{(i)}_j=\alpha_{j-2(i-k-1)}$
for $j>2(i-k)$.
%for $2(i-k)<j\leq i$.
%Therefore, $\ker\,u^{i-k}\cap{\rm Span}\{e_1,\ldots,e_{n-1}\}\subset {\rm
%Span}\{r^{(i)}_1,\ldots,r^{(i)}_i\}$ and 
%$r_j^{(i)}\in\ker
%\,u^{j-(i-k)}$ for $2(i-k)<j\leq i$. 
Moreover for $j>2(i-k)$, one has $r_j^{(i)}\in\ker
\,u^{j-(i-k)}$, and if
$\alpha_3,\ldots,\alpha_{k+1}$ are all nonzero, then
$r_j^{(i)}\not\in\ker\,u^{j-(i-k)-1}$.
\item[\rm (iii)] Set
$\widetilde{\beta}^{(i)}_1=\widetilde{\beta}^{(i)}_2=0$. For
$3\leq j\leq i$, set
$\widetilde{\beta}^{(i)}_j:=\widetilde{\beta}^{(i)}_{j-2}+\beta^{(i)}_j$.
Then $$r^{(i)}_j-e_j-\widetilde{\beta}^{(i)}_je_{j+1}\in{\rm
Span}\{e_{j+2},\ldots,e_{n-1}\}$$ for every $j \ :\ 1\leq j\leq i$.
\item[\rm (iv)] One has ${\rm Span}\{r^{(i)}_1,\ldots,r^{(i)}_i\}={\rm
Span}\{v_1,\ldots,v_{k+1},v_{k+2},\ldots, v_i\}$.
\item[\rm (v)] If
$\alpha_3=\ldots=\alpha_{k+1}=0$ then $v_i=e_i$ for any $1\leq
i\leq n-1$ and $r^{(i)}_j=e_j$ for any $k+2\leq i\leq n-1$ and
$1\leq j\leq i$.
\end{itemize}
\end{lemma}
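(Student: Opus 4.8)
The plan is to prove (i)--(v) simultaneously by induction on the superscript $i$, from $i=k+1$ up to $i=n-1$, each level-$i$ statement being deduced from the level-$(i-1)$ one through the two defining relations $r^{(i)}_j=w(r^{(i-1)}_{j-2})$ and $\beta^{(i)}_j=\beta^{(i-1)}_{j-2}$ (valid for $3\le j\le i$), together with $r^{(i)}_1=e_1$, $r^{(i)}_2=e_2$. The base case $i=k+1$ is immediate: since $r^{(k+1)}_j=v_j$ and $\beta^{(k+1)}_j=\alpha_j$, parts (i), (iii), and the kernel assertions of (ii) are exactly Lemma \ref{lemma-4.6}(i), (iii), (ii); the first assertion of (ii) holds with $2(i-k)=2$ because $v_1=e_1$, $v_2=e_2$; (iv) is tautological; and (v), for $\alpha\equiv 0$, follows from $v_i=w(v_{i-2})$ by a trivial induction (using $w(e_{i-2})=e_i$ for $i\le n-1$).

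For the inductive step, the first identity in (i) is obtained by applying the linear map $w$ to the level-$(i-1)$ identity and relabelling indices, the cases $j=3,4$ being checked by hand from $r^{(i)}_3=w(e_1)=e_3$, $r^{(i)}_4=w(e_2)=e_4$, $\beta^{(i)}_3=\beta^{(i)}_4=0$. The second identity in (i) then follows by applying $u$ and using that $u\circ w$ acts as the identity on $W_1=\mathrm{Span}\{e_1,\ldots,e_{n-3}\}$; the point requiring care is that the vectors $r^{(i)}_{j-2},r^{(i)}_{j-1}$ on the right-hand side must lie in $W_1$. This is exactly where the hypothesis $n=2k+1$ enters: I would strengthen (iii) in the induction to the sharper claim that $r^{(i)}_m$ is supported on $\{e_m,\ldots,e_{2m-2i+2k}\}$ (which also requires recording the analogous bound $v_m\in\mathrm{Span}\{e_m,\ldots,e_{2m-2}\}$, proved the same way), so that for $m\le i-1$ the support lies in $\{e_1,\ldots,e_{n-3}\}=W_1$ while only $r^{(i)}_i=v_i$ can reach $e_{n-1}$. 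Given the $u$-identity, the $u$-stability of $\mathrm{Span}\{r^{(i)}_1,\ldots,r^{(i)}_j\}$ is formal, and the inclusion $\mathrm{Span}\{r^{(i-1)}_1,\ldots,r^{(i-1)}_{i-1}\}\subset\mathrm{Span}\{r^{(i)}_1,\ldots,r^{(i)}_i\}$ follows by recovering $r^{(i-1)}_m=u(w(r^{(i-1)}_m))=u(r^{(i)}_{m+2})$ for $m\le i-2$ from the $u$-stable level-$i$ span, the top index $m=i-1$ being absorbed via the induction hypothesis (iv) at level $i-1$.

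Part (ii): iterating $\beta^{(i)}_j=\beta^{(i-1)}_{j-2}$ and $r^{(i)}_j=w(r^{(i-1)}_{j-2})$ back to $i=k+1$ shifts the window of nonzero coefficients down by two each step while raising the block $r^{(i)}_j=e_j$ from $j\le 2(i-1-k)$ to $j\le 2(i-k)$, giving $\beta^{(i)}_j=\alpha_{j-2(i-k-1)}$ for $j>2(i-k)$ as stated; the membership $r^{(i)}_j\in\ker u^{\,j-(i-k)}$, and its sharpness when all $\alpha$'s are nonzero, come from the $u$-identity of (i) applied to the level-$(i-1)$ hypothesis (with base case Lemma \ref{lemma-4.6}(ii)). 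Part (iii) in its stated form is read off from the sharpened support claim (or directly by applying $w$ to the level-$(i-1)$ statement, noting $\widetilde\beta^{(i)}_j=\widetilde\beta^{(i-1)}_{j-2}$). For (iv), the $r^{(i)}_j$ are linearly independent since their leading terms are $e_1,\ldots,e_i$ by (iii); combining the inclusion from (i) iterated down to level $k+1$, the definition $v_j=r^{(j)}_j$ for $j\ge k+2$, and Lemma \ref{lemma-4.6} for $j\le k+1$ gives $\mathrm{Span}\{v_1,\ldots,v_i\}\subseteq\mathrm{Span}\{r^{(i)}_1,\ldots,r^{(i)}_i\}$, and a dimension count forces equality. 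Finally (v) is the specialization $\alpha_3=\cdots=\alpha_{k+1}=0$: then every $\beta^{(i)}_j$ vanishes and the recursion collapses to $r^{(i)}_j=w(r^{(i-1)}_{j-2})$ with $r^{(k+1)}_j=v_j=e_j$, whence $r^{(i)}_j=e_j$ for all admissible $i,j$.

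I expect the main obstacle to be the boundary bookkeeping near the indices $n-2$ and $n-1$, where $w$ vanishes: for the $u$-identity of (i) to hold on the nose one must know that the relevant $r^{(i)}_m$ do not acquire $e_{n-2}$- or $e_{n-1}$-components, which forces one to carry the precise support bound (rather than just the echelon normal form recorded in (iii)) through the induction and to use the hypothesis $n=2k+1$ in an essential way.
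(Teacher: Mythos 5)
Your proof follows the same route as the paper's: a nested induction on the superscript $i$ driven by the recursions $r^{(i)}_j=w(r^{(i-1)}_{j-2})$ and $\beta^{(i)}_j=\beta^{(i-1)}_{j-2}$, with Lemma \ref{lemma-4.6} as the base case $i=k+1$, and the same echelon-form/dimension-count argument for (iv). You are in fact more careful than the paper on one point: deducing $u(r^{(i)}_j)=r^{(i)}_{j-2}+\beta^{(i)}_jr^{(i)}_{j-1}$ from the $w$-identity does require knowing $r^{(i)}_{j-1},r^{(i)}_{j-2}\in\mathrm{Span}\{e_1,\ldots,e_{n-3}\}$, and your sharpened support bound $r^{(i)}_m\in\mathrm{Span}\{e_m,\ldots,e_{2m-2i+2k}\}$ (so that only $m=i$ can reach $e_{n-2},e_{n-1}$) supplies exactly this; the paper passes over it in silence. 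The one step that does not work as written is the inclusion $\mathrm{Span}\{r^{(i-1)}_1,\ldots,r^{(i-1)}_{i-1}\}\subset\mathrm{Span}\{r^{(i)}_1,\ldots,r^{(i)}_i\}$ at the top index $m=i-1$: your recovery $r^{(i-1)}_m=u(r^{(i)}_{m+2})$ needs $m+2\le i$, and invoking (iv) at level $i-1$ only restates the problem, since placing $v_{i-1}=r^{(i-1)}_{i-1}$ inside the level-$i$ span is precisely what is at stake. The fix is the direct computation the paper uses, valid for all $3\le j\le i-1$ including $j=i-1$:
$$r^{(i-1)}_j=w(r^{(i-1)}_{j-2})+\beta^{(i-1)}_jw(r^{(i-1)}_{j-1})=r^{(i)}_j+\beta^{(i-1)}_jr^{(i)}_{j+1},$$
which is also what your $u(r^{(i)}_{m+2})$ computation produces in the range where it applies. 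With that one substitution your argument is complete and coincides with the paper's.
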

\begin{proof}
We prove (i) by induction. For $i=k+1$ one has $r_j^{(k+1)}=v_j$
and $\beta^{(k+1)}_j=\alpha_j$ so that by Lemma \ref{lemma-4.6} (i), the
claim is true. Assume the claim is true for $i\geq k+1$ and show
for $i+1.$ For $j=3,4$ one has
$r_j^{(i+1)}=w(r_{j-2}^{(i)})+\beta_j^{(i+1)}w(r_{j-1}^{(i)})$
(since $\beta_j^{(i+1)}=0$) so that the claim is trivially true.
For $j\geq 5$ one has
$$\begin{array}{rcll}
r_j^{(i+1)}&=&w(r_{j-2}^{(i)})=
w(w(r^{(i)}_{j-4})+\beta_{j-2}^{(i)}w(r_{j-3}^{(i)}))&{\rm( by\ induction\ hypothesis)}\\
&=&w(r^{(i+1)}_{j-2}+ \beta_{j}^{(i+1)}r_{j-1}^{(i+1)})&{\rm (by\ definition)}\\
&=&w(r^{(i+1)}_{j-2})+\beta_{j}^{(i+1)}w(r_{j-1}^{(i+1)}).&\\
\end{array}
$$
Thus, the claim is true. This provides
$u(r_j^{(i)})=r^{(i)}_{j-2}+\beta_j^{(i)}r_{j-1}^{(i)}$ so that
${\rm Span}\{r^{(i)}_1,\ldots,r^{(i)}_{i}\}$ is $u$-stable.  To
show that ${\rm Span} \{r_1^{(i-1)},\ldots,r_{i-1}^{(i-1)}\}
\subset {\rm Span}\{r_1^{(i)},\ldots,r_i^{(i)}\}$ note that for
$3\leq j\leq i-1$ we have
$r_j^{(i-1)}=w(r_{j-2}^{(i-1)})+\beta_j^{(i-1)}w(r_{j-1}^{(i-1)})=r_{j}^{(i)}+\beta_j^{(i-1)}r_{j+1}^{(i)}$.

To show (ii) first note that $r_j^{(i)}=w(r^{(i-1)}_{j-2})$ and
$\beta_j^{(i)}=\beta_{j-2}^{(i-1)}$ so that the first three equalities follow by
a straightforward induction on $i\geq k+1.$ 
%Further, as a
%straightforward corollary we get
%$$\ker u^{i-k}\cap{\rm Span}\{e_1,\ldots,e_{n-1}\}={\rm Span}\{r_j^{(i)}\ :\ j\leq 2(i-k)\}\subset {\rm
%Span}\{r_j^{(i)}\}_{j=1}^i.$$ 
The proof of the last claim in (ii)
is exactly the same as the proof of the last claim in Lemma
\ref{lemma-4.6} (ii).

The proof of (iii) is exactly the same as the proof of Lemma
\ref{lemma-4.6} (iii).

To prove (iv) recall that $v_i=r_i^{(i)}$ for $i\geq k+2$ and note that by (i)
$${\rm Span}\{ v_1,\ldots,v_{k+1},r^{(k+2)}_{k+2},\ldots,
r^{(i)}_{i}\}\subset {\rm Span}\{
r^{(i)}_1,\ldots,r^{(i)}_{i}\}.$$ Further by Lemma \ref{lemma-4.6} (iii)
we have $v_j-e_j\in {\rm Span}\{e_{j+1},\ldots,e_{n-1}\}$ for any $j\leq
k+1$ and by (iii) we have $r^{(j)}_j-e_j\in{\rm Span}\{e_{j+1},\ldots,
e_{n-1}\}$ for any $j\geq k+2$. Therefore, the family of vectors
$\{v_1,\ldots,v_{k+1},r^{(k+2)}_{k+2},\ldots,r_i^{(i)}\}$ is
linearly independent so that
$$\dim {\rm Span}\{v_1,\ldots,v_{k+1},r^{(k+2)}_{k+2},\ldots,r_i^{(i)}\}=i\geq \dim {\rm Span}\{ r^{(i)}_1,\ldots,r^{(i)}_{i}\}$$
which provides the equality.

We prove (v) by induction on $i$. It is trivially true for
$i=1,2.$ Assume it is true for $2\leq i\leq k$ and show for $i+1.$
In that case $v_{i+1}=w(v_{i-1})=w(e_{i-1})=e_{i+1}.$ Now if it is true
for $i\geq k+1$ then $r_j^{(i+1)}=e_j$ for $j=1,2$ by definition.
For $3\leq j\leq i+1$ we have $r_{j}^{(i+1)}=w(r_{j-2}^{(i)})=w(e_{j-2})=e_j$.
In particular, $v_i=r_i^{(i)}=e_i$ for $k+2\leq i\leq n-1$.  
\end{proof}

\subsection{Nonsingularity of the component $\K^{Q(k,k,1)}$}

\label{4.8}

Now we are ready to show
\begin{proposition}
Every $F_{(d)}\ :\ 3\leq d\leq k+2$ lies in $\K^{Q(k,k,1)}$ and is a nonsingular point.
In particular, $\K^{Q(k,k,1)}$ is nonsingular.
\end{proposition}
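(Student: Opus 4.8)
The plan is to construct, for each $d\in\{3,\ldots,k+2\}$, the closed immersion $\Phi:\Co^{k+2}\rightarrow\Omega_{(d)}$ announced in section \ref{4.3}, using the vectors $v_1,\ldots,v_{n-1}$ built in section \ref{4.6}. Recall $n=2k+1$, so $\K^{Q(k,k,1)}$ has dimension $k+2$ by formula $(1)$. A flag in $\F_{Q(k,k,1)}$ is a $u$-stable flag whose associated chain of Young diagrams is prescribed by $Q(k,k,1)$; concretely, the $i$-th space must lie in $\ker u^j$ for the appropriate $j$, with the special box $n$ (sitting in row $3$, column $1$ of $Q(k,k,1)$) forcing $\ker u\subset V_{k+2}$ as already observed via $(5)$. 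The idea is: the $k-1$ parameters $\alpha_3,\ldots,\alpha_{k+1}$ govern the ``generic'' $u$-stable filtration $V_1\subset\cdots\subset V_{k+1}$ inside $\operatorname{Span}\{e_1,\ldots,e_{n-1}\}$ through Lemma \ref{lemma-4.6}; one more parameter, say $a_{k+2}$, is used to insert the vector $e_n$ (spanning $\ker u$ not already reached) at step $k+2$; and the remaining parameters $a_{k+3},\ldots,a_{k+2}$ — wait, we have exactly $k+2$ parameters $a_1,\ldots,a_{k+2}$, which I will match as follows: $a_1,a_2$ are dummy/absorbed or used to normalize $v_1,v_2$, $a_3,\ldots,a_{k+1}$ are the $\alpha_i$'s, and $a_{k+2}$ controls the position of $e_n$; the tail $v_{k+2}=r^{(k+2)}_{k+2},\ldots,v_{n-1}=r^{(n-1)}_{n-1}$ is then determined, giving a full flag.

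More precisely, I would first define $\Phi(a_1,\ldots,a_{k+2})$ as the flag $(V_0,\ldots,V_n)$ with $V_i=\operatorname{Span}\{v_1,\ldots,v_i\}$ for $i\leq k+1$ (setting $\alpha_j=a_j$), then $V_{k+2}=V_{k+1}+\Co(e_n+a_{k+2}\,(\text{something in }V_{k+1}^{\complement}))$ — I will need to choose the correction term so that at $a_{k+2}=0$ one recovers $F_{(d)}$, which means the $e_n$-slot must land in position $d$; this is exactly where $d$ enters the construction. For $i>k+2$ set $V_i=V_{k+2}+\operatorname{Span}\{v_{k+3}',\ldots,v_i'\}$ where the $v'_j$ are the appropriate $r^{(j)}_j$ (or a small modification thereof, adjusted by $a_{k+2}$ so that $e_n$ is carried along correctly), all of which lie in $\operatorname{Span}\{e_1,\ldots,e_{n-1}\}\oplus\Co e_n$ and are $u$-stable cumulatively by Lemma \ref{lemma-4.7}(i). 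The three things to verify are: (A) for all $a_i\neq 0$ the flag lies in $\F_{Q(k,k,1)}$ — here Lemma \ref{lemma-4.6}(ii) and Lemma \ref{lemma-4.7}(ii) give precisely the statements $v_i\in\ker u^{i-1}\setminus\ker u^{i-2}$ and $r^{(i)}_j\in\ker u^{j-(i-k)}\setminus\ker u^{j-(i-k)-1}$, which translate directly into the required Jordan-type conditions along the flag; (B) $\Phi(0,\ldots,0)=F_{(d)}$ — by Lemma \ref{lemma-4.7}(v), when all $\alpha_j=0$ we get $v_i=e_i$ and $r^{(i)}_j=e_j$, so the flag degenerates to the standard flag with $e_n$ inserted at slot $d$, i.e. $F_{(d)}$, provided the $a_{k+2}$-correction was chosen to vanish there; and (C) $\Phi$ is a closed immersion into $\Omega_{(d)}$ — using the coordinates $(\phi_{i,j})$ on $\Omega_{(d)}$ from section \ref{4.3}, Lemma \ref{lemma-4.6}(iii) and Lemma \ref{lemma-4.7}(iii) say $v_i-e_i-\widetilde\alpha_i e_{i+1}\in\operatorname{Span}\{e_{i+2},\ldots\}$, so the map $a\mapsto(\phi_{i,j})$ is triangular with the entries $\phi_{i,i+1}=\widetilde\alpha_i=\widetilde\alpha_{i-2}+\alpha_i$ (and the $e_n$-coordinate $=a_{k+2}$) recovering the $a_i$ invertibly; hence $\Phi$ is an isomorphism onto its (closed, affine-space) image.

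Then the conclusion follows from the scheme laid out at the end of section \ref{4.3}: the image of $\Phi$ is a locally closed subset of $\K^{Q(k,k,1)}$ isomorphic to $\Co^{k+2}$, hence of full dimension $k+2=\dim\K^{Q(k,k,1)}$, hence open in $\K^{Q(k,k,1)}$, hence nonsingular; and by (B) it contains $F_{(d)}$, so $F_{(d)}$ is a nonsingular point of $\K^{Q(k,k,1)}$. Since $d\in\{3,\ldots,k+2\}$ was arbitrary, every $F_{(d)}$ lying in $\K^{Q(k,k,1)}$ is a nonsingular point, and Lemma \ref{lemma-4.4} then yields that $\K^{Q(k,k,1)}$ is nonsingular. (One should double-check that each $F_{(d)}$ actually does lie in $\K^{Q(k,k,1)}$, which is immediate once (A) and (B) are established since $F_{(d)}$ is then a limit of points of $\F_{Q(k,k,1)}$.)

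I expect the main obstacle to be the bookkeeping around the insertion of $e_n$: getting a single parameter $a_{k+2}$ to simultaneously (i) move $e_n$ into the flag at the right step $k+2$, (ii) degenerate to slot $d$ at $a_{k+2}=0$, and (iii) keep the tail spaces $V_{k+3},\ldots,V_{n-1}$ both $u$-stable and of the correct Jordan type, while still having the whole map be a closed immersion. This couples the ``two-column-like'' part (handled cleanly by the lemmas of section \ref{4.6}) with the extra hook box, and it is the only place where $d$ genuinely enters; the verification of (A), (B), (C) for the $\alpha$-part is essentially a transcription of Lemmas \ref{lemma-4.6} and \ref{lemma-4.7}.
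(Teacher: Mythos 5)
Your proposal reproduces the strategy already announced in \ref{4.3} and correctly matches Lemmas \ref{lemma-4.6} and \ref{lemma-4.7} to the conditions (A), (B), (C), but the actual construction of $\Phi$ --- which is the entire substance of the proof --- is missing, and the sketch of the parametrization you do give cannot work. You propose that $a_1,a_2$ be ``dummy/absorbed'', that $a_3,\ldots,a_{k+1}$ be the $\alpha_i$'s, and that a single extra parameter control the insertion of $e_n$; that is at most $k$ effective parameters, so the resulting map cannot be a closed immersion of $\Co^{k+2}$, and its image cannot be open in the $(k+2)$-dimensional component. The correct parametrization requires, besides $\alpha_1$ and $\alpha_3,\ldots,\alpha_{k+1}$, \emph{two} further free parameters that are $e_n$-components attached to the early flag vectors: one sets $\eta_i=v_i+\gamma_ie_n$ for $i\leq k+1$ (and $\eta_1=e_1+\alpha_1e_2+\gamma_1e_n$, $\eta_2=e_2+\gamma_2e_n$), with the $\gamma_i$ coupled by the recursion $\gamma_i=-\alpha_{i+2}\gamma_{i+1}$, $\gamma_1=-(\alpha_3-\alpha_1)\gamma_2$. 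This recursion is forced by $u$-stability of the $U_i$ (so that $u(\eta_i)=\eta_{i-2}+\alpha_i\eta_{i-1}$), and it is exactly what leaves $\gamma_k,\gamma_{k+1}$ free, bringing the parameter count to $k+2$. Declaring the $e_n$-insertion to be governed by one parameter and the rest ``bookkeeping'' skips the mechanism by which the dimension is achieved.

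The second genuine omission is the case $d<k+2$, which you explicitly defer. There the generic flag must still acquire $e_n$ at step $k+2$ (since $J(u|_{U_{k+2}})=(k,1,1)$), yet degenerate to $F_{(d)}$ with $e_n$ in slot $d$; this requires a different assignment ($\eta_d=e_n+\nu v_d$, $\eta_i=v_{i-1}+\alpha_{i+1}v_i$ for $d+1\leq i\leq k+1$, $\eta_{k+2}=v_{k+1}$, with $\alpha_{d+1}:=-\nu\gamma_{d-1}$ no longer free and $\alpha_{k+2}$ entering the free list instead), and the verification that $\Phi^*$ is surjective then hinges on the nontrivial identity $\widetilde\beta^{(i)}_i=\widetilde\alpha_d$ for $i=2k+2-d$, which recovers the coordinate $\widetilde\alpha_d$ that is not directly visible among the $\phi_{j,j+1}$. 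Your triangularity argument for (C) is correct in spirit for the $\alpha$-part, but without the explicit $\eta_i$'s neither (A) (the Jordan-type computation $J(u|_{U_i})=(i-1,1)$, which uses $\gamma_{d-1},\nu\neq 0\Rightarrow\alpha_{d+1}\neq 0$) nor (C) can be checked. In short: the plan is the right one, but the proof is not yet there.
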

\begin{proof}
As we have explained in \ref{4.3}, it is sufficient to construct  a
closed immersion $\Phi:\Co^{k+2}\rightarrow \Omega_{(d)}$
satisfying the two conditions:
\begin{itemize}
\item[(A)] For $(a_1,\ldots,a_{k+2})$ such that $a_i\ne 0$ for any
$i$, one has $\Phi(a_1,\ldots,a_{k+2})\in \F_{Q(k,k,1)}$.
\item[(B)] $\Phi(0,\ldots,0)=F_{(d)}$.
\end{itemize}

We start with the case $d=k+2.$ Consider
$(\alpha_1,\alpha_3,\ldots,\alpha_{k+1},\gamma_k,\gamma_{k+1})\in\Co^{k+2}$
and let $\{v_i\}_{i=1}^{n-1}$ be the vectors associated to
$\{\alpha_i\}_{i=3}^{k+1}$ constructed in \ref{4.6}.
Set $\{\gamma_i\}_{i=1}^{k-1}$ inductively by
$\gamma_i:=-\alpha_{i+2}\gamma_{i+1}$ for $i\geq 2$ and
$\gamma_1=-(\alpha_3-\alpha_1)\gamma_2.$

Now we define vectors $\eta_1,\ldots,\eta_n$ as
follows. Set $\eta_1:=e_1+\alpha_1e_2+\gamma_1e_n$,
$\eta_2:=e_2+\gamma_2e_n$ and $\eta_{k+2}:=e_n$. For $i\ :\ 3\leq
i\leq k+1$  set $\eta_i:=v_i+\gamma_ie_n$. For $i\ :\ k+3\leq
i\leq n$ set $\eta_i:=v_{i-1}$. For $i\geq 1$ put $U_i={\rm
Span}\{\eta_1,\ldots,\eta_i\}$.

Set
$\Phi(\alpha_1,\alpha_3,\ldots,\alpha_{k+1},\gamma_k,\gamma_{k+1}):=(U_0,\ldots,U_n)$.
Note that
\begin{itemize}
\item{} {\it $(U_0,\ldots,U_n)\in \Omega_{(d)}$ for any
$(\alpha_1,\alpha_3,\ldots,\alpha_{k+1},\gamma_k,\gamma_{k+1})\in\Co^{k+2}$.}
This is straightforward from Lemma \ref{lemma-4.6} (iii) and Lemma
\ref{lemma-4.7} (iii).

\item{}  $\Phi(0,\ldots,0)=F_{(d)}$ by Lemma
\ref{lemma-4.7} (v).

\item{} {\it $\Phi:\Co^{k+2}\rightarrow
\Omega_{(d)}$ is a closed immersion.} To show this recall from
\ref{4.3} that we have
$\eta_{i}=e_{(k+2)(i)}+\sum_{j=i+1}^n\phi_{i,j}e_{(k+2)(j)}$
Consider the dual morphism of algebras $$\Phi^*:\Co[\phi_{i,j}:1\leq i<j\leq
n]\rightarrow
\Co[\alpha_1,\alpha_3,\ldots,\alpha_{k+1},\gamma_k,\gamma_{k+1}].$$
 Let us show that $\Phi^*$ is surjective.
Indeed,
$\alpha_1,\alpha_3,\ldots,\alpha_{k+1},\gamma_k,\gamma_{k+1}$ can
be expressed in terms of the $\phi_{i,j}$'s. First we have
$\eta_1=e_1+\alpha_1e_2+\gamma_1e_n$ hence
$\alpha_1=\phi_{1,2}\in\mathrm{Im}\,\Phi^*$. For $i=3,\ldots,k+1$
by Lemma \ref{lemma-4.6} (iii) one has
$\eta_i=e_i+\widetilde{\alpha}_ie_{i+1}+\gamma_ie_n+\eta'_i$ where
$\eta'_i\in\mathrm{Span}\{e_{i+2},\ldots,e_{n-1}\}$. Hence
$\widetilde{\alpha}_i\in \mathrm{Im}\,\Phi^*$.
Thus,
$\alpha_i=\widetilde{\alpha}_i-\widetilde{\alpha}_{i-2}\in\mathrm{Im}\,\Phi^*$.
Note also that $\gamma_k=\phi_{k,k+2}$ and $\gamma_{k+1}=\phi_{k+1,k+2}$ so that $\gamma_k,\gamma_{k+1}\in
\mathrm{Im}\,\Phi^*$. Therefore, $\Phi^*$ is surjective so that
$\Phi$ is a closed immersion.

\item{} {\it
$(U_0,\ldots,U_n)\in\B_u$ for any
$(\alpha_1,\alpha_3,\ldots,\alpha_{k+1},\gamma_k,\gamma_{k+1})\in\Co^{k+2}$.}
Indeed,  by  definition of $v_i$ and $\eta_i$ we get $u(U_i)=0$
for $i=1,2$; also
$$\begin{array}{rcl}
U_i&=&{\rm Span}\{e_1+\alpha_1e_2+\gamma_1e_n,e_2+\gamma_2e_n,\ldots,v_{k+1}+\gamma_{k+1}e_n,e_n,v_{k+2},\ldots, v_{i-1}\}\\
&=&{\rm Span}\{e_n,v_1,\ldots,v_{i-1}\}\\
\end{array}$$
 for $i\geq k+2$ so that it is $u$-stable by
Lemma \ref{lemma-4.7} (i) and (iv). Further since
$\gamma_1=-(\alpha_3-\alpha_1)\gamma_2$ we get
$$\begin{array}{rl}u(\eta_3)=&u(v_3+\gamma_3e_n)=e_1+\alpha_3e_2=
(e_1+\alpha_1e_2+\gamma_1e_n)+(\alpha_3-\alpha_1)(e_2+\gamma_2e_n)\\
=&\eta_1+(\alpha_3-\alpha_1)\eta_2.\\
\end{array}$$
In the same way, taking into account
$\gamma_{i-2}=-\alpha_i\gamma_{i-1}$ for $i\geq 4$ we get for $i\
:\ 4\leq i\leq k+1$:
$$ u(\eta_i)=u(v_i+\gamma_ie_n)=v_{i-2}+\alpha_iv_{i-1}=(v_{i-2}+\gamma_{i-2}e_n)+\alpha_i(v_{i-1}+\gamma_{i-1}e_n)
=\eta_{i-2}+\alpha_i\eta_{i-1}$$ thus, $u(\eta_i)\in U_{i-1}$ for
any $3\leq i\leq k+1.$ Thus, $U_i$ is $u-$stable also for any
$3\leq i\leq k+1.$

\item{} {\it If $\alpha_3,\ldots,\alpha_k\ne 0$
then $(U_0,\ldots,U_n)\in \F_{Q(k,k,1)}$.} Indeed, by the
definition of $\eta_1,\eta_2$ one has $U_2\subset\ker u$ and if
$\alpha_3,\ldots,\alpha_k\ne 0$ then by Lemma \ref{lemma-4.6} (ii)
$\eta_i\not\in \ker u^{i-2}$ thus $J(u|_{U_i})=(i-1,1)$ for any
$2\leq i\leq k+1$. Then since $\eta_{k+2}=e_n$ we get
$J(u|_{U_{k+2}})=(k,1,1)$ and since $(U_0,\ldots,U_n)\in \B_u$
this provides  $(U_0,\ldots,U_n)\in \F_{Q(k,k,1)}.$
\end{itemize}
This completes the proof for $d=k+2.$

\medskip
The construction in the case $d<k+2$ is very similar.

Let $(\alpha_1,\alpha_3,\ldots,
\alpha_d,\alpha_{d+2},\ldots\alpha_{k+2},\gamma_{d-1},\nu)\in\Co^{k+2}$
and we set $\alpha_{d+1}:=-\nu\gamma_{d-1}.$ Let
$\{v_i\}_{i=1}^{n-1}$ be the set of vectors associated to
$\{\alpha_i\}_{i=3}^{k+1}$ as they are defined in \ref{4.6}.
Set $\{\gamma_i\}_{i=2}^{d-2}$ by
$\gamma_i=-\alpha_{i+2}\gamma_{i+1}$ and
$\gamma_1=-(\alpha_3-\alpha_1)\gamma_2$ (exactly as in the
previous case). And again as in the previous case we define
$\{\eta_i\}_{i=1}^n$ by $\eta_1:=e_1+\alpha_1e_2+\gamma_1e_n$,
$\eta_2:=e_2+\gamma_2e_n$ and for $3\leq i\leq d-1$ set
$\eta_i:=v_{i}+\gamma_ie_n.$ And exactly as in the previous time
for $i\geq k+3$ set $\eta_i:=v_{i-1}.$ We change the definition of
$\eta_i$ for $d\leq i\leq k+2.$  Put $\eta_d:=e_n+\nu v_d$ and
$\eta_{k+2}:=v_{k+1}.$ Finally, put
$\eta_i:=v_{i-1}+\alpha_{i+1}v_i$ for $d+1\leq i\leq k+1.$ And
again put $U_i:={\rm Span}\{\eta_1,\ldots,\eta_i\}$ and consider
the flag $\Phi(\alpha_1,\alpha_3,\ldots,
\alpha_d,\alpha_{d+2},\ldots\alpha_{k+2},\gamma_{d-1},\nu):=(U_0,U_1,\ldots,U_n)$.
Exactly as in the case $d=k+2$ one has
\begin{itemize}
\item{} {\it $(U_0,\ldots,U_n)\in \Omega_{(d)}$ for any
$(\alpha_1,\alpha_3,\ldots,
\alpha_d,\alpha_{d+2},\ldots\alpha_{k+2},\gamma_{d-1},\nu)\in\Co^{k+2}$.}
This is a straightforward consequence of Lemma \ref{lemma-4.6} (iii) and Lemma
\ref{lemma-4.7} (iii).

\item{}  $\Phi(0,\ldots,0)=F_{(d)}$ by Lemma
\ref{lemma-4.7} (v).

\item{} {\it $\Phi:\Co^{k+2}\rightarrow
\Omega_{(d)}$ is a closed immersion.} To show this we again
consider the dual map
$$\Phi^*:\Co[\phi_{i,j}\ :\ 1\leq i<j\leq n]\rar \Co[\alpha_1,\alpha_3,\ldots,\alpha_d,\alpha_{d+2},\ldots,\alpha_{k+2},\gamma_{d-1},\nu]$$
and show that it is surjective. Again, since
$\eta_1=e_1+\alpha_1e_2+\gamma_1e_n$ we get
$\alpha_1=\phi_{1,2}\in {\rm Im}\, \Phi^*$. 
Consider $\{\widetilde{\alpha}_i\}_{i=3}^{k+1}$ introduced in Lemma \ref{lemma-4.6} (iii). 
Set in addition $\widetilde{\alpha}_{k+2}:=\alpha_{k+2}+\tilde\alpha_k$.
For $3\leq i\leq d-1$
we get exactly as in case $d=k+2$ that
$\widetilde\alpha_i\in
{\rm Im}\, \Phi^*.$ Note also that $\gamma_{d-1}=\phi_{d-1,d}$.
Further, $\eta_d=e_n+\nu e_d+\eta'_d$ where $\eta'_d\in{\rm
Span}\{e_{d+1},\ldots,e_{n-1}\}$ so that $\nu=\phi_{d,d+1}.$ By
our definition $\alpha_{d+1}=-\nu\gamma_{d-1}$ so that
$\alpha_{d+1}\in {\rm Im}\, \Phi^*$ thus,
$\widetilde\alpha_{d+1}=\widetilde\alpha_{d-1}+\alpha_{d+1}\in{\rm
Im}\, \Phi^*$. For $d+2\leq i\leq k+2$ again one has
$\eta_{i-1}=e_{i-2}+(\widetilde\alpha_{i-2}+\alpha_i)e_{i-1}+\eta'_{i-1}$
where $\eta'_{i-1}\in{\rm Span}\{e_{i+2},\ldots,e_{n-1}\}$ so that
$\phi_{i-1,i}=\widetilde\alpha_{i-2}+\alpha_i=
\widetilde\alpha_i.$ Thus,
$\{\widetilde\alpha_i\}_{i=3}^{d-1}\cup\{\widetilde\alpha_i\}_{i=d+1}^{k+2}\subset
{\rm Im}\, \Phi^*.$ It remains to show that
$\widetilde\alpha_d\in {\rm Im}\, \Phi^*$ to get that
$\alpha_i=\widetilde\alpha_i-\widetilde\alpha_{i-2}\in {\rm Im}\,
\Phi^*$ for any $3\leq i\leq k+2$ which completes the proof. To
prove this, set $i=2k+2-d$. Then, since $3\leq d\leq k+1$, one has
$k+2\leq i+1\leq n.$ Thus,
$\eta_{i+1}=v_i=e_i+\widetilde\beta_i^{(i)}e_{i+1}+\eta'_{i+1}$
where $\eta'_{i+1}\in{\rm Span}\{e_{i},\ldots,e_{n-1}\}$. Thus,
$\phi_{i+1,i+2}=\widetilde\beta_i^{(i)}.$ By Lemma \ref{lemma-4.6} (iii)
and Lemma \ref{lemma-4.7} (ii)--(iii) we get
$\widetilde\beta_i^{(i)}=\widetilde\alpha_{i-2(i-k-1)}=\widetilde\alpha_d$
which completes the proof.

\item{} {\it $(U_0,\ldots,U_n)\in\B_u$ for any
$(\alpha_1,\alpha_3,\ldots,
\alpha_d,\alpha_{d+2},\ldots\alpha_{k+2},\gamma_{d-1},\nu)\in\Co^{k+2}$.}
For $i\geq k+2$, as in the case $d=k+2$, we can see that $U_i={\rm
Span}\{e_n,v_1,\ldots,v_{i-1}\}$ so that $U_i$ is $u$-stable.
Further, $U_i$ is $u$-stable for $i\leq d-1$  exactly as in the
case $d=k+2$. Further, since $\gamma_{d-2}=-\alpha_d\gamma_{d-1}$
we get exactly as in the case $d=k+2$:
$$u(\eta_d)=\nu u(v_d)=\nu(v_{d-2}+\alpha_dv_{d-1})=\nu(\eta_{d-2}+\alpha_d\eta_{d-1}).$$
Thus, $U_d$ is $u$-stable. We also note that by the same formula we
get $u(v_d)\in U_{d-1}.$ Since $\alpha_{d+1}=-\nu\gamma_{d-1}$ we
get
$$u(v_{d+1})=v_{d-1}+\alpha_{d+1}v_d=(v_{d-1}+\gamma_{d-1}e_n)-\gamma_{d-1}(e_n+\nu v_d)=\eta_{d-1}-\gamma_{d-1}\eta_d$$
so that $u(v_{d+1})\in U_d.$ Now for $i\ :\ d+2\leq i\leq k+1$ we
get $u(v_i)=v_{i-2}+\alpha_i v_{i-1}=\eta_{i-1}$ so that
$u(v_i)\in U_{i-1}.$ Altogether this provides us that for $i:
d+1\leq i\leq k+1$ one has
$$u(\eta_i)=u(v_{i-1}+\alpha_{i+1}v_i)\in U_{i-1}$$
so that $U_i$ is $u$-stable for any $d+1\leq i\leq k+1$, which
completes the proof.

\item{} {\it Finally, if
$\alpha_3,\ldots,\alpha_d,\alpha_{d+2},\ldots,\alpha_{k+2},\gamma_{d-1},\nu
\ne 0$ then $(U_0,\ldots,U_n)\in \F_{Q(k,k,1)}$.} First, note that
$\gamma_{d-1},\nu\ne 0$ implies $\alpha_{d+1}\ne 0.$ Since
$\eta_i$ is defined  exactly as in the case $d=k+2$ for $i\leq
d-1$ we get $J(u|_{U_i})=(i-1,1)$ for any $i\leq d-1$. Further,
since $\eta_d=e_n+\nu v_d$ and  $\eta_i=v_{i-1}+\alpha_{i+1}v_i$
for $d+1\leq i\leq k+1$  Lemma \ref{lemma-4.6} (ii) provides
$\eta_i\in\ker u^{i-1}$ and $\eta_i\not\in \ker u^{i-2}$ so that
again  $J(u|_{U_i})=(i-1,1)$ for any $d\leq i\leq k+1$. As we saw
in the previous  item $U_{k+2}={\rm
Span}\{e_n,v_1,\ldots,v_{k+1}\}$ so that $J(u|_{U_{k+2}})=(k,1,1)$
and since $(U_0,\ldots,U_n)\in \B_u$ this provides
$(U_0,\ldots,U_n)\in \F_{Q(k,k,1)}.$
\end{itemize}
This completes the proof for $d<k+2.$  
\end{proof}

\section*{Index of the notation}

\begin{itemize}
\item[\ref{1.1}\ ] $V$, $n$, $u$, ${\mathcal B}$, ${\mathcal
B}_n$, ${\mathcal B}_u$, $J(u)$, $Y(u)$, $Y_\lambda$,
$\mathbf{Tab}_\lambda$ \item[\ref{1.3}\ ] $\lambda^*$, $\mathrm{sh}\,(T)$,
$\pi_{1,i}(T)$, ${\mathcal F}_T$, ${\mathcal K}^T$
\item[\ref{2.2}\ ] $\pi_{i,j}(T)$, $Sch(T)$, ${\mathcal F}'_T$
\item[\ref{3.0}\ ] $\lambda_{[i,j]}$, $\varsigma_i$, $(T)_{i,j}$,
$T_j$, $T_{[i,j]}$, $St(T)$ \item[\ref{3.1}\ ] $C(T)$, $C^{-1}(S)$
\item[\ref{3.5}\ ] $C_{[k,l]}(T)$, $Sch_{[k,l]}(T)$
\item[\ref{3.6}\ ] $Eqs(T)$ \item[\ref{3.7}\ ] $P(r,s)$
\item[\ref{4.0}\ ] $Q(k,k,1)$, $P(m,m|t)$
\item[\ref{4.1}\ ] $r_T(i)$, $\tau(T)$, $j(T)$, $\mathrm{dist}(T)$
\item[\ref{4.3}\ ] $F_\sigma$,
$\mathbf{S}_u$, $(d)$, $F_{(d)}$,
$\Omega_{(d)}$, $(\eta_1,\ldots,\eta_n)$, $(\phi_{i,j})_{1\leq
i<j\leq n}$
\end{itemize}

\end{document}